\newtheorem{theo}{Theorem}[section]
\newtheorem{lemma}[theo]{Lemma}
\newtheorem{propo}[theo]{Proposition}
\newtheorem{defi}[theo]{Definition}
\newtheorem{rem}[theo]{Remark}
\newtheorem{exam}[theo]{Example}
\theoremstyle{definition}
\theoremstyle{remark}
\newcommand\colim{\mathop{\rm colim}\limits}
\newcommand\Loc{\operatorname{Loc}}
\newcommand\LEq{\operatorname{LEq}}
\newcommand\op{\operatorname{op}}
\newcommand\ev{\operatorname{ev}}
\newcommand\id{\operatorname{id}}
\newcommand\Id{\operatorname{Id}}
\newcommand\SSet{\operatorname{\bf SSet}}
\newcommand\cof{\operatorname{cof}}
\newcommand\ca{\mathcal {A}}
\newcommand\cg{\mathcal {G}}
\newcommand\cc{\mathcal {C}}
\newcommand\cd{\mathcal {D}}
\newcommand\cf{\mathcal {F}}
\newcommand\ch{\mathcal {H}}
\newcommand\ci{\mathcal {I}}
\newcommand\cj{\mathcal {J}}
\newcommand\ck{\mathcal {K}}
\newcommand\cl{\mathcal {L}}
\newcommand\cm{\mathcal {M}}
\newcommand\crr{\mathcal {R}}
\newcommand\ct{\mathcal {T}}
\newcommand\cp{\mathcal {P}}
\newcommand\cw{\mathcal {W}}
\newcommand\cal{\mathcal}
\newcommand\Ex{\operatorname{Ex}}
\newcommand\Fib{\operatorname{Fib}}
\newcommand\fin{\operatorname{fin}}
\DeclareMathOperator{\Hor}{\ensuremath{\textup{Hor}}}
\date{October 18, 2011}
\begin{document}
\title[Class-combinatorial model categories]
{Class-combinatorial model categories}
\author[B. Chorny and J. Rosick\'{y}]
{B. Chorny and J. Rosick\'{y}$^*$}
\thanks{ $^*$ Supported by MSM 0021622409 and GA\v CR 201/11/0528. The hospitality of the Australian National University 
is gratefully acknowledged.} 
\address{\newline B. Chorny\newline
Department of Mathematics\newline
University of Haifa at Oranim\newline
Tivon, Israel\newline
chorny@math.haifa.ac.il
\newline\newline 
J. Rosick\'{y}\newline
Department of Mathematics and Statistics\newline
Masaryk University, Faculty of Sciences\newline
Brno, Czech Republic\newline
rosicky@math.muni.cz
}

\begin{abstract}
We extend the framework of combinatorial model categories, so that the category of small presheaves over large indexing categories and ind-categories 
would  be embraced by the new machinery called class-combinatorial model categories.

The definition of the new class of model categories is based on the corresponding extension of the theory of locally presentable and accessible 
categories developed in the companion paper \cite{CR}, where we introduced  the concepts of locally class-presentable and class-accessible categories.

In this work we prove that the category of weak equivalences of a nice class-combinatorial model category is class-accessible. Our extension of J.~Smith 
localization theorem depends on the verification of a cosolution-set condition. The deepest result is that the (left Bousfield) localization 
of a class-combinatorial model category with respect to a strongly class-accessible localization functor is class-combinatorial again.
\end{abstract}

\keywords{class-combinatorial model category, Bousfield localization}

\maketitle
 
\section{Introduction}
The theory of combinatorial model categories pioneered by J.~Smith in the end of '90-s has become a standard framework for abstract homotopy theory. 
The foundations of the subject may be found in \cite{B} and \cite{D-universal, D}; a concise exposition has appeared in \cite[A.2.6]{L}.

A model category $\cm$ is combinatorial if it satisfies two conditions. The first condition requires that the underlying category $\cm$ is locally 
presentable (see, e.g., \cite{AR} for the definition and an introduction to the subject). The second condition demands that the model structure 
will be cofibrantly generated (see, e.g., \cite{Ho} for the definition and discussion).

Several interesting examples of non-combinatorial model categories appeared over the past decade. For example the categories of pro-spaces and ind-spaces 
were applied in new contexts in homotopy theory \cite{CI,I} resulting in non-cofibrantly generated model structures constructed on non-locally presentable
categories. The maturation of the calculus of homotopy functors \cite{Goodwillie-CalculusIII} stimulated the development of the abstract homotopy theory 
of small functors over large categories \cite{CD} resulting in formulation of the basic ideas of Goodwillie calculus in the language of model categories 
\cite{BCR}. The model categories used for this purpose are also not cofibrantly generated and the underlying category of small functors from spaces 
to spaces (or spectra) is not a locally presentable category.

However, all the model categories from the examples above are \emph{class-cofibrantly} generated (except for the pro-categories, which are 
\emph{class-fibrantly} generated). This extension of the classical definition was introduced in \cite{Chorny-prospaces}, which in turn developed 
the ideas by E.~Dror Farjoun originated in the equivariant homotopy theory \cite{Farjoun}.

The purpose of the current paper is to develop a framework extending J.~Smith's combinatorial model categories, so that the model categories of small 
presheaves over large categories, ind-categories of model categories (the opposite categories of pro-categories) would become the examples of the newly 
defined \emph{class-combinatorial} model categories. The definition of the class-combinatorial model category consists, similarly to the combinatorial 
model category, of two conditions: the underlying category is required to be \emph{locally class-presentable} and the model structure must be 
class-cofibrantly generated. As we mentioned above, the second condition was studied in the earlier work \cite{Chorny-prospaces}, while 
the first condition relies on a concept of the locally class-presentable category, which was introduced and studied in the companion project \cite{CR}, 
which is a prerequisite for reading this paper.

The main results of our paper are generalizing the corresponding results about the combinatorial model categories. In Theorem~\ref{th2.10} we prove 
that the levelwise weak equivalences in the category of small presheaves form a class-accessible category (see \cite{CR} for the definition). 
In Remark~\ref{re2.12} we formulate the mild conditions, which guarantee that the class-combinatorial model category has the class-accessible subcategory 
of weak equivalences. Such class-combinatorial model categories are called nice in this paper.

The central result of J.~Smith's theory is the localization theorem, stating the existence of the (left Bousfield) localization of any combinatorial 
model category with respect to any \emph{set} of maps. After a brief discussion of construction of localization functors with respect to cone-coreflective
\emph{classes} of cofibrations with bounded presentability ranks of domains and codomains, we prove in Theorem~\ref{th3.10} a variant of a localization 
theorem for nice class-combinatorial model categories with respect to strongly class-accessible homotopy localization functors (i.e., localization 
functors preserving $\lambda$-filtered colimits and $\lambda$-presentable objects for some cardinal $\lambda$).  Although an application 
of our localization theorem depends on the verification of a cosolution-set condition for the class of intended generating trivial cofibrations, 
we are able to check this condition in many interesting situations. In the last Theorem~\ref{th3.14} we prove that in the cases where the localization 
with respect to a strongly class-accessible functor exists, the localized model category is  class-combinatorial again. We conclude the paper by several 
examples of localized model categories. Using Theorem~\ref{th3.14} we show that the $n$-polynomial model category constructed in \cite{BCR} 
is class-combinatorial (Example~\ref{ex3.16}). On the other hand, there is a model category constructed in \cite{C1} as a localization 
of a class-combinatorial model category with respect to an inaccessible localization functor that happens to be non-cofibrantly generated 
(Example~\ref{ex3.17}).

\section{Class-combinatorial model categories} 
 
Recall that a weak factorization system $(\cl,\crr)$ in a locally class-$\lambda$-presentable category $\ck$ was called
\textit{cofibrantly class}-$\lambda$-\textit{generated} in \cite{CR} 4.7 if $\cl=\cof(\cc)$ for a cone-coreflective class $\cc$ of morphisms 
such that
\begin{enumerate}
\item[(1)] morphisms from $\cc$ have $\lambda$-presentable domains and codomains and
\item[(2)] any morphism between $\lambda$-presentable objects has a weak factorization with the middle object $\lambda$-presentable.
\end{enumerate}
To be \textit{cone-coreflective} means for each $f$ there is a subset $\cc_f$ of $\cc$ such that each morphism $g\to f$ in $\ck^\to$
with $g\in\cc$ factorizes as
$$
g\to h\to f
$$
with $h\in\cc_f$. 

If the weak factorization is functorial, a cofibrantly class-$\lambda$-generated weak factorization system is cofibrantly class-$\mu$-generated 
for each regular cardinal $\mu\trianglerighteq \lambda$. Without functoriality, condition (2) does not need to go up to $\mu$ and thus we will make it
a part of the following definition.

\begin{defi}\label{def2.1}
{\em
Let $\ck$ be a class locally-$\lambda$-presentable model category. We say that $\ck$ is \textit{class}-$\lambda$-\textit{combinatorial} 
if both (cofibrations, trivial fibrations) and (trivial cofibrations, fibrations) are cofibrantly class-$\mu$-generated weak factorization 
systems for every regular cardinal $\mu\trianglerighteq \lambda$.

It is called \textit{class-combinatorial} if it is class-$\lambda$-combinatorial for some regular cardinal $\lambda$.
}
\end{defi}

Any combinatorial model category is class-combinatorial. The reason is that weak factorizations are functorial and, moreover, the resulting
functors are accessible. Thus they are strongly accessible and this property goes up for $\lambda\triangleleft\mu$ (cf. \cite{R1}).

\begin{exam}\label{ex2.2}
{
\em
Let $\SSet$ denote the category of simplicial sets. Given a simplicial category $\ca$, by abuse of notation, $\cp(\ca)$ 
will denote the category of \emph{small simplicial presheaves} on $\ca$. The objects are functors $\ca^{\op}\to\SSet$ which are 
small weighted colimits of simplicial representable functors (see \cite{DL}). In \cite{CR}, we used this notation for small presheaves
on a category $\ca$ but it will cause any misunderstanding. The simplicial category $\cp(\ca)$ is complete provided that $\ca$ is complete 
(see \cite{DL}); completeness is meant in the enriched sense (see, e.g., \cite{Bor} or \cite{K1}).
 
The category $\cp(\ca)$ is always class-finitely-accessible, because each small simplicial presheaf is a conical colimit of presheaves 
from $\cg=\{\hom_{\ca}(-,A)\otimes K \,|\, A\in\ca,\, K\in\SSet\}$. Therefore each small presheaf is a filtered colimit of finite colimits 
of elements of $\cg$. The elements of $\cg$ are, in turn, filtered colimits of the elements 
of $\cg_{\fin}=\{\hom(-,A)\otimes L \,|\, A\in\ca,\, L\in\SSet_{\fin})\}$ where $\SSet_{\fin}$ denotes the full
subcategory of $\SSet$ consisting of finitely presentable simplicial sets. Therefore, every small presheaf is a filtered colimit 
of finite colimits of the elements of $\cg_{\fin}$.
}
\end{exam}

We are going to show that, for a complete simplicial category $\ca$, the category $\cp(\ca)$ equipped with the projective model structure 
is class-combinatorial. We will need the following result.

\begin{lemma}\label{le2.3}
Let $\ca$ be a complete simplicial category and $\mu$ an uncountable regular cardinal. Then $\mu$-presentable objects in $\cp(\ca)$ are closed 
under finite weighted limits. 
\end{lemma}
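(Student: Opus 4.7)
The plan is to verify the defining condition of class-$\mu$-presentability for the finite weighted limit directly, by reducing to the commutation of finite weighted limits with $\mu$-filtered colimits in $\SSet$, taking care that the test diagrams are of the form admissible in the class-accessible framework of \cite{CR}.

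First I would recall from Example~\ref{ex2.2} that $\cp(\ca)$ is class-finitely-accessible, generated by $\cg_{\fin}=\{\hom_{\ca}(-,A)\otimes L\mid A\in\ca,\,L\in\SSet_{\fin}\}$, whose elements are class-$\aleph_0$-presentable, and that $\cp(\ca)$ is complete because $\ca$ is. Using the characterization of class-$\mu$-presentable objects from \cite{CR}, testing $\mu$-presentability of an object $F$ reduces to checking that $\cp(\ca)(F,-)$ preserves $\mu$-filtered colimits of the prescribed admissible form (conical $\mu$-filtered colimits of finite colimits of elements of $\cg_{\fin}$, up to the allowed retracts).

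Next, for a finite weighted limit $L=\{W,F_\bullet\}$ of $\mu$-presentable objects $F_1,\dots,F_n$ indexed by a finite (simplicial) category $J$, and for a test $\mu$-filtered colimit $X=\colim_\alpha X_\alpha$, I would run the chain of natural isomorphisms
$$
\cp(\ca)(L,X)\;\cong\;\{W,\cp(\ca)(F_\bullet,X)\}_{\SSet}
\;\cong\;\{W,\colim_\alpha\cp(\ca)(F_\bullet,X_\alpha)\}_{\SSet}
\;\cong\;\colim_\alpha\{W,\cp(\ca)(F_\bullet,X_\alpha)\}_{\SSet}
\;\cong\;\colim_\alpha\cp(\ca)(L,X_\alpha).
$$
The first isomorphism is the universal property of the weighted limit, the second uses $\mu$-presentability of each $F_i$ componentwise, the third is the commutation of finite weighted limits with $\mu$-filtered colimits in $\SSet$ (which is a topos, so finite limits and filtered colimits commute, and uncountability of $\mu$ ensures this is compatible with the finite weighted structure and with the $\mu$-smallness bookkeeping), and the fourth is again the universal property.

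The main obstacle I anticipate is the compatibility between the enriched weighted-limit calculus and the precise class of admissible $\mu$-filtered diagrams from \cite{CR}: one must check that after the interchange the exterior colimit is still of the admissible shape and that finite weighted limits preserve the class-smallness of the indexing data. The uncountability assumption enters exactly here, since it guarantees that a finite combination of $\mu$-small pieces remains $\mu$-small; everything else is a routine enriched Fubini computation, so I expect the argument to pass through without further difficulty once the bookkeeping with \cite{CR} is in place.
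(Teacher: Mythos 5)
Your chain of isomorphisms fails at the first (and, dually, the last) step, and that step is precisely the content of the lemma. The enriched universal property of a weighted limit computes maps \emph{into} it: $\cp(\ca)\bigl(X,\{W,F_\bullet\}\bigr)\cong[\cd,\SSet]\bigl(W,\cp(\ca)(X,F_\bullet)\bigr)$. There is no formula of this kind for $\cp(\ca)\bigl(\{W,F_\bullet\},X\bigr)$: mapping \emph{out} of a (weighted) limit is not a weighted limit of mapping-outs, and $\mu$-presentability of $L=\{W,F_\bullet\}$ is exactly a statement about maps out of $L$. Nor can the commutation of finite limits with filtered colimits in $\SSet$ repair this: in the category of groups finite limits also commute with filtered colimits, yet finitely presentable groups are not closed under pullback (the kernel of a suitable surjection $F_2\times F_2\to\mathbb{Z}$ is a pullback of finitely presented groups which is not even finitely generated). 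So no purely formal interchange argument of the sort you propose can prove the statement; closure of $\mu$-presentable objects under finite limits is a special property of presheaf-like categories that needs concrete input, and the "bookkeeping with \cite{CR}" you defer is not where the real difficulty lies.

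The paper's proof supplies that input in two steps. First, by Kelly's result, finite weighted limits are generated by finite conical limits together with cotensors by finitely presentable simplicial sets, so it suffices to treat these two cases. For finite conical limits one passes to the underlying category and embeds it into the category of small functors, where closure of $\mu$-presentable objects under finite limits is a genuine theorem of Karazeris--Rosick\'y--Velebil (\cite{KRV} 4.9). For cotensors one reduces further to cotensors with the $\Delta_n$, writes a $\mu$-presentable $H$ as a $\mu$-small colimit of generators $H_i=V\otimes\hom(-,B)$ with $V$ finitely presentable, checks pointwise that $(-)^{\Delta_n}$ passes through this colimit, and computes $(V\otimes\hom(-,B))^{\Delta_n}\cong V^{\Delta_n}\otimes\hom(-,B^{\Delta_n})$, where completeness of $\ca$ is used to form $B^{\Delta_n}$ inside $\ca$ --- a hypothesis your argument never engages with beyond completeness of $\cp(\ca)$. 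Any correct proof will have to do work of this concrete kind (identifying what the limit of small presheaves actually is and bounding its presentability rank) rather than rely on a universal property that limits do not have on the covariant side.
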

\begin{proof}
A weight $W:\cd\to\SSet$ is finite if $\cd$ has finitely many objects and all its hom-objects $\cd(c,d)$ and all values $W(d)$ are finitely 
presentable simplicial sets (see \cite{K} 4.1.) Following \cite{K} 4.3, finite weighted limits can be constructed from finite conical limits
and cotensors with finitely presentable simplicial sets. Thus we have to show that $\mu$-presentable objects in $\cp(\ca)$ are closed under
these limits. Following \cite{Bor} 6.6.13 and 6.6.16, finite conical limits in $\cp(\ca)$ coincide with finite limits in the underlying category 
$\cp(\ca)_0$. This underlying category is a subcategory of the category $\cp(\ca_0)$ of small functors from \cite{CR} 2.2 (2) which is closed under 
limits and colimits. Thus $\mu$-presentable objects in $\cp(\ca)$ are $\mu$-presentable in $\cp(\ca_0)$. Since the latter objects are closed under 
finite limits (see \cite{KRV} 4.9), $\mu$-presentable objects in $\cp(\ca)$ are closed under finite conical limits. It remains to show that 
they are closed under cotensors with finitely presentable simplicial sets. The later are finite conical colimits of cotensors with $\Delta_n$, 
$n=1,2,\dots$ (see the proof of \cite{DL}, 5.2). Thus we have to show that $\mu$-presentable objects in $\cp(\ca)$ are closed under cotensors 
with $\Delta_n$'s.  

Let $H$ be $\mu$-presentable in $\cp(\ca)$. Since $H$ is a $\mu$-small colimit of tensors $H_i$ of finitely presentable simplicial sets
with representables and both colimits and cotensors in $\cp(\ca)$ are pointwise, we have
\begin{align*}
H^{\Delta_n}(A)&=(\colim H_i)^{\Delta_n}(A)=(\colim H_i(A))^{\Delta_n}\\
&=\hom(\Delta_n,\colim H_i(A))\cong\colim\hom(\Delta_n,H_i(A))\\
&=\colim H_i(A)^{\Delta_n}=(\colim H_i^{\Delta_n})(A)
\end{align*}
for each $A$ in $\ca$. Hence
$$
H^{\Delta_n}\cong\colim H_i^{\Delta_n}
$$
and thus it suffices to show that each $H_i^{\Delta_n}$ is $\mu$-presentable. Since each $H_i$ is equal to $V\otimes\hom(-,B)$ for
some finitely presentable simplical set $V$ and $B$ in $\ca$, we get for the same reasons as above
\begin{align*}
& \hom(\Delta_n,V \otimes \hom(A,B)) =
\hom(\Delta_n, V \times \hom(A,B)) \cong\\
& \hom(\Delta_n,V) \times \hom(\Delta_n,\hom(A,B)) \cong 
V^{\Delta_n} \times \hom(A,B^{\Delta_n}) =\\
& (V^{\Delta_n} \otimes \hom(-,B^{\Delta_n}))(A)
\end{align*}
and thus
$$
(V\otimes\hom(-,B))^{\Delta_n} \cong V^{\Delta_n} \otimes \hom(-,B^{\Delta_n}).
$$
The latter objects are $\mu$-presentable.
\end{proof}

\begin{propo}\label{prop2.4}
Let $\ca$ be a complete simplicial category. Then $\cp(\ca)$ is class-$\lambda$-combinatorial with respect to the projective model structure
for each uncountable regular cardinal $\lambda$.
\end{propo}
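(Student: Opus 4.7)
The plan is to verify the two conditions of Definition~\ref{def2.1}. First I would confirm that $\cp(\ca)$ is locally class-$\lambda$-presentable: Example~\ref{ex2.2} already records that $\cp(\ca)$ is class-finitely-accessible, and completeness of $\ca$ guarantees cocompleteness (and completeness) of $\cp(\ca)$ by \cite{DL}; combining these with the appropriate characterization from the companion paper \cite{CR} upgrades class-finite-accessibility to local class-$\lambda$-presentability for every uncountable regular $\lambda$.

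Second, I would take as generating classes for the two weak factorization systems of the projective model structure
\begin{align*}
\ci &= \{\hom(-,A)\otimes \partial\Delta_n \to \hom(-,A)\otimes \Delta_n \mid A \in \ca,\ n\geq 0\},\\
\cj &= \{\hom(-,A)\otimes \Lambda^k_n \to \hom(-,A)\otimes \Delta_n \mid A \in \ca,\ 0\leq k \leq n,\ n\geq 1\}.
\end{align*}
Every domain and codomain belongs to the class $\cg_\fin$ of Example~\ref{ex2.2} and is therefore finitely presentable in $\cp(\ca)$; in particular it is $\mu$-presentable for every uncountable regular $\mu\trianglerighteq\lambda$, which verifies condition~(1) of cofibrant class-$\mu$-generation.

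The crux of the proof is cone-coreflectivity. Given $f\colon X\to Y$ in $\cp(\ca)^\to$, write $X$ and $Y$ as small colimits of representable tensors. A commutative square from a generator $\hom(-,A)\otimes(\partial\Delta_n\to\Delta_n)$ to $f$ is, via the enriched Yoneda lemma, the data of an $n$-simplex $\sigma\in Y(A)_n$ together with a lift of its boundary through $X(A)$. Since colimits of simplicial sets are computed levelwise in $\Set$, the simplex $\sigma$ comes from a single stage $\hom(-,A_i)\otimes K_i$ of the colimit defining $Y$, and the boundary data come from finitely many stages of the colimit defining $X$. Replacing the indexing diagrams by their closure under finite colimits (which keeps them small) allows one to concentrate these data at a single stage, so that the intermediate arrow $\hom(-,A_i)\otimes(\partial\Delta_n\to\Delta_n)$ belongs to a set $\ci_f$ depending only on $f$. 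The argument for $\cj$ is identical after replacing boundaries by horns.

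Finally, condition~(2) of Definition~\ref{def2.1} follows from the small object argument applied to $\ci$ and $\cj$: because the generators have finitely presentable domains and codomains, the argument terminates at $\omega$ and each intermediate stage is constructed as a pushout along a $\mu$-small coproduct of generators. Combining Lemma~\ref{le2.3} with the closure of $\mu$-presentable objects under $\mu$-small colimits (a consequence of local class-$\mu$-presentability) then ensures that every morphism between $\mu$-presentable objects admits a factorization with a $\mu$-presentable middle object. The main obstacle is the cone-coreflectivity step, whose subtlety is that the small colimit presentation of a general small presheaf need not be filtered, forcing us to refine the indexing diagrams so that the finitely many pieces of boundary data of a generating cell all live at a common stage.
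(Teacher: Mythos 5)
Your overall strategy (verify the two halves of Definition~\ref{def2.1} for the generating classes $\ci$ and $\cj$) matches the paper's, and your treatment of local class-presentability and of condition (1) is fine. The genuine gap is in condition (2), the boundedness of $\ci$ and $\cj$, which is in fact the only thing the paper has to prove (the projective model structure, the cone-coreflectivity of $\ci$ and $\cj$, and condition (1) are quoted from \cite{CD}). You assert that in the small object argument ``each intermediate stage is constructed as a pushout along a $\mu$-small coproduct of generators,'' but that is exactly the point at issue: $\ci$ is a proper class, and cone-coreflectivity only supplies a \emph{set} $\ci_f$ of relevant generators, with no bound on its cardinality; your own construction of $\ci_f$, from arbitrary small colimit presentations of $X$ and $Y$, gives no control by $\mu$ even when $X$ and $Y$ are $\mu$-presentable. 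The paper closes this by the adjunction sending a square from $\partial\Delta_n\otimes\hom(-,A)\to\Delta_n\otimes\hom(-,A)$ into $f\colon G\to H$ to a map $\hom(-,A)\to P$ with $P=G^{\partial\Delta_n}\times_{H^{\partial\Delta_n}}H^{\Delta_n}$; Lemma~\ref{le2.3} is invoked precisely here, to see that $P$ is $\mu$-presentable, whence by \cite{CR}~4.8(2) a cone set of cardinality at most $\mu$ can be chosen, and since the generators have finitely presentable domains and codomains the factorization stops at $\omega$ with a $\mu$-presentable middle object. In your write-up Lemma~\ref{le2.3} is instead used to conclude that the middle object is $\mu$-presentable, a step which needs only closure of $\mu$-presentables under $\mu$-small colimits and for which the finite-weighted-limit lemma does nothing; the needed cardinality bound is never established.

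A secondary problem is your cone-coreflectivity sketch. Cone-coreflectivity requires the square to factor through an element of $\ci$ itself, i.e., through $\partial\Delta_n\otimes\hom(-,B)\to\Delta_n\otimes\hom(-,B)$ for a single object $B$; after you close the indexing diagrams under finite colimits, the stages are finite colimits of representable tensors rather than representable tensors, so the ``intermediate arrow'' you produce is not a generator. The clean fix is again the adjunction above: a map $\hom(-,A)\to P$ into a small presheaf $P$ factors through some $\hom(-,B)\to P$ with $B$ drawn from a set determined by a colimit presentation of $P$ (this is the argument of \cite{CD}~3.7, which the paper simply cites). If you route both steps through this adjunction, your proof becomes essentially the paper's.
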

\begin{proof}
Following \cite{CD}, $\cp(\ca)$ is a model category where the generating classes $\ci$ and $\cj$ of cofibrations and trivial cofibrations
are cone-coreflective and satisfy condition \cite{CR} 4.7 (1) for any regular cardinal $\lambda$. In fact, $\ci$ consists of morphisms
$$
\partial\Delta_n\otimes\hom(-,A)\to\Delta_n\otimes\hom(-,A)
$$
and $\cj$ of morphisms
$$
\Lambda^k_n\otimes\hom(-,A)\to\Delta_n\otimes\hom(-,A),
$$
and all involved domains and codomains are finitely presentable. We have to show that they satisfy \cite{CR} 4.7 (2)  as well, i.e., that 
they are bounded. Let $\lambda$ be uncountable and $f:G\to H$ be a morphism between $\lambda$-presentable objects and consider a morphism 
$g\to f$ where $g\in\ci$. Following the proof of 3.7 in \cite{CD}, this morphism corresponds to a morphism $\hom(-,A)\to P$ where $P$ 
is the pullback 
$$
G^{\partial\Delta_n}\times_{H^{\partial\Delta_n}}H^{\Delta_n}.
$$
Since $P$ is $\lambda$-presentable (see \ref{le2.3}), there is a choice of a set $\ct_f$ from \cite{CR} 4.8 (2) whose cardinality 
does not exceed $\lambda$. Since all morphisms from $\ci$ have finitely presentable domains and codomains, the factorization of $f$ 
stops at $\omega$. Thus the cardinality of $\ct_f^\ast$ is smaller than $\lambda$. Following \cite{CR} 4.8 (2), condition \cite{CR} 
3.7 (2) is satisfied. The argument for $\cj$ is the same.
\end{proof}

\begin{rem}\label{re2.5}
{
\em
A very useful property of the combinatorial model categories is that the class of weak equivalences is an accessible and accessibly
embedded subcategory of the category of morphisms $\ck^\to$ (see \cite{R2} 4.1 or \cite{L} A2.6.6). Together with Smith's theorem \cite{B} 
it constitutes the localization theorem for combinatorial model categories with respect to sets of maps. It would be natural to expect that 
a similar property holds in the class-combinatorial situation. Unfortunately we were unable to prove it in this generality. But in many interesting 
situations we are able to prove that the class of weak equivalences is a class-accessible subcategory of the category of morphisms.  
}
\end{rem}

\begin{lemma}\label{le2.6}
Let $\ca$ be a complete simplicial category. Then $\cp(\ca)$ admits a strongly class-accessible fibrant replacement functor.
\end{lemma}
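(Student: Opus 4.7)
The plan is to construct the fibrant replacement $R\colon\cp(\ca)\to\cp(\ca)$ by the small object argument applied to the generating class $\cj$ of trivial cofibrations from Proposition~\ref{prop2.4}. Since each morphism in $\cj$ has finitely presentable domain and codomain, the construction terminates at $\omega$: set $F_0:=F$, let $F_{n+1}$ be the pushout of $F_n$ along the coproduct of the cone-coreflecting set of squares $g\to (F_n\to\ast)$ with $g\in\cj$, and put $R(F):=\colim_n F_n$ with the evident unit $F\to R(F)$. By the analysis in the proof of Proposition~\ref{prop2.4} (cf.\ \cite{CD}, 3.7), squares from a generator $g\colon\Lambda^k_m\otimes\hom(-,A)\to\Delta_m\otimes\hom(-,A)$ to $F_n\to\ast$ are classified by morphisms $\hom(-,A)\to F_n^{\Lambda^k_m}$, and by Lemma~\ref{le2.3} the cotensor $F_n^{\Lambda^k_m}$ is again a small simplicial presheaf; this provides a functorial choice of indexing data.

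Next I would verify the standard properties: the unit $F\to R(F)$ is a trivial cofibration as a countable composite of pushouts of coproducts of maps in $\cj$, while $R(F)\to\ast$ has the right lifting property against $\cj$ because any square from a finitely presentable $g\in\cj$ into $R(F)$ factors through some $F_n$ and its diagonal is supplied at stage $n+1$. Hence $R$ is a genuine fibrant replacement functor.

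Strong class-$\lambda$-accessibility is then checked for a sufficiently large uncountable regular $\lambda$. Preservation of $\lambda$-filtered colimits is routine: such colimits in $\cp(\ca)$ are pointwise; the cotensor $(-)^{\Lambda^k_m}$ preserves them since $\Lambda^k_m$ is finitely presentable; and finite pushouts together with countable sequential colimits commute with $\lambda$-filtered colimits for $\lambda$ uncountable. Preservation of $\lambda$-presentable objects proceeds by induction on $n$: if $F_n$ is $\lambda$-presentable then so is $F_n^{\Lambda^k_m}$ by Lemma~\ref{le2.3}, so the indexing set for the pushout at stage $n$ is bounded by a function of $\lambda$; thus $F_{n+1}$ is a $\lambda$-small colimit of $\lambda$-presentable objects, and the countable sequential colimit $R(F)$ is $\lambda$-presentable as well.

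The main obstacle is the bookkeeping that makes this construction simultaneously functorial in $F$ and strongly $\lambda$-accessible: one must choose representatives for the cone-coreflecting sets in a manner stable under morphisms $F\to F'$ and compatible with $\lambda$-filtered colimits. This is precisely why the canonical description of the indexing data through the cotensor $F_n^{\Lambda^k_m}$ is essential: that cotensor is visibly functorial in $F$, preserves $\lambda$-filtered colimits, and by Lemma~\ref{le2.3} remains inside the $\lambda$-presentable objects, keeping all cardinalities under uniform control.
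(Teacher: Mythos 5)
There is a genuine gap at the point you yourself flag as ``the main obstacle'': functoriality. Your construction is the generalized small object argument applied to the \emph{class} $\cj$, and in this setting the gluing step cannot be performed over all squares $g\to(F_n\to\ast)$, because $\cj$ is indexed by all objects $A\in\ca$ and the required coproduct would run over a proper class (the morphisms $\hom(-,A)\to F_n^{\Lambda^k_m}$ form a proper class as $A$ ranges over $\ca$, even though each individual cotensor $F_n^{\Lambda^k_m}$ is a small presheaf). Cone-coreflectivity only guarantees, for each $F_n\to\ast$, the \emph{existence} of some set of squares through which all others factor; that set is chosen non-canonically, and nothing in the cotensor description singles out a canonical small family of attaching maps that is stable under morphisms $F\to F'$ and under $\lambda$-filtered colimits. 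So the output of your construction is a fibrant replacement \emph{object} for each $F$, but not a functor, and a fortiori not a strongly class-accessible one; the preservation-of-$\lambda$-presentables induction also silently depends on the unproved uniformity of these choices. This is exactly the known defect of class-cofibrantly generated factorizations that the paper is working around: see Remark~\ref{re2.8}, where the whole point of introducing the Serre construction (and, in Lemma~\ref{le2.6}, of a different device) is that the ``modern methods of factorization'' are not functorial here.

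The paper's proof avoids the small object argument entirely: it sets $\Fib(F)=\Ex^\infty\circ F$, using Kan's finitely accessible simplicial fibrant replacement $\Ex^\infty$ on $\SSet$, which is functorial and preserves weak equivalences by construction. The only real work is to show $\Ex^\infty F$ is again a \emph{small} presheaf: $\Ex^\infty$, being finitely accessible, is a weighted colimit of hom-functors $\hom(S,-)$ with $S$ finitely presentable, so $\Ex^\infty F$ is a weighted colimit of the functors $\hom(S,-)F\cong F^S$, which are small; strong class-$\lambda$-accessibility then follows because the weight is $\lambda$-small and the cotensors $F^S$ are $\lambda$-presentable by Lemma~\ref{le2.3}. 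If you want to salvage your approach you would need to exhibit a functorial and accessibility-preserving choice of attaching data over the proper class $\cj$, which is precisely what is not available; otherwise the argument should be replaced by (or reduced to) the composition-with-$\Ex^\infty$ construction.
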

\begin{proof}
The functor $\Ex^\infty:\SSet\to\SSet$ is the finitely accessible fibrant replacement simplicial functor (see \cite{GJ}). For a small simplicial
functor $F:\ca^{\op}\to\SSet$, let $\Fib(F)$ be the composition
$$
\ca^{\op} \xrightarrow{\quad  F\quad} \SSet\xrightarrow{\quad Ex^\infty\quad} \SSet.
$$
We will show that this composition is small. 

The category of finitely accessible simplicial functors $\SSet\to\SSet$ is equivalent to the category of simplicial functors 
$\SSet^{\SSet_{\fin}}$. This equivalence sends a finitely accessible functor $\SSet\to\SSet$ to its restriction on $\SSet_{\fin}$.
Thus hom-functors $\hom(S,-):\SSet\to\SSet$ with $S$ finitely presentable correspond to hom-functors $\hom(S,-):\SSet_{\fin}\to\SSet$.  
Since every simplical functor $\SSet_{\fin}\to\SSet$ is a weighted colimit of hom-functors, every finitely accessible simplicial functor
$\SSet\to\SSet$ is a weighted colimit of hom-functors $\hom(S,-)$ with $S$ finitely presentable. Thus the composition $\Ex^\infty F$
is a weighted colimit of functors $\hom(S,-)F$ with $S$ finitely presentable. But the fun\-ctor $\hom(S,-)F$ is small because it is isomorphic 
to the cotensor $F^S$. The reason is that natural transformations 
$$
\ca(-,A)\to \hom(S,-)F=\hom(S,F-)
$$ 
correspond to morphisms $S\to FA$, i.e., to morphisms 
$$
S\to\cp(\ca)(\ca(-,A),F)
$$ 
which, by the definition of the cotensor, correspond to morphisms $\ca(-,A)\to F^S$. Consequently, $\Ex^\infty F$ is small as a weighted
colimit of small functors. 

We have obtained the functor $\Fib:\cp(\ca)\to\cp(\ca)$ which clearly has fibrant values. Moreover, the pointwise trivial cofibration
$$
\Id_{\SSet}\to\Ex^\infty
$$ 
yields a weak equivalence $\Id_{\cp(\ca)}\to\Fib$. Thus $\Fib$ is a fibrant replacement functor on $\cp(\ca)$. Since $\Ex^\infty$ is finitely 
accessible, so is $\Fib$. We know that $\Ex^\infty$ is a weighted colimit of hom-functors $\hom(S,-)$ with $S$ finitely presentable.
The corresponding weight is $\lambda$-small for an uncountable regular cardinal $\lambda$. Let $F$ be $\lambda$-presentable in $\cp(\ca)$.
Then $\Fib(F)$ is a $\lambda$-small weighted colimit of $\hom(S,-)F\cong F^S$ and the latter functors are $\lambda$-presentable following
\ref{le2.3}. Hence $\Fib(F)$ is $\lambda$-presentable (the argument is analogous to \cite{K}, 4.14). Thus $\Fib$ is strongly 
class-$\lambda$-accessible.
\end{proof}

\begin{defi}\label{def2.7}
{
\em
Let $\ca$ be a complete simplicial category and $f:A\to B$ be a morphism in $\cp(\ca)$. The \textit{Serre construction} on $f$ is the object
$S(f)$ of $\cp(\ca)$ defined as a pullback
$$
\xymatrix{
S(f) \ar [r]^{r} \ar [d]_{q}& B^{\Delta_1} \ar [d]^{B^j}\\
B\ar [r]_{f}& B
}
$$
where $j:\Delta_0\to\Delta_1$ sends $0$ to $0$.
} 
\end{defi}

\begin{rem}\label{re2.8}
{
\em
The Serre construction was used in the PhD thesis of J.P.~Serre in order to replace an arbitrary map of topological spaces by a fibration. 
We are going to use it pretty much for the same purpose in $\cp(\ca)$. The advantage over the modern methods of factorization is the functoriality 
of $S(-)$.
}
\end{rem}

\begin{lemma}\label{le2.9}
Let $\ca$ be a complete simplicial category and $f:A\to B$ a morphism of fibrant objects in $\cp(\ca)$. Then there exists a factorization 
$$
A \xrightarrow{\quad  i\quad} S(f)\xrightarrow{\quad p\quad} B
$$
of $f$ where $i$ is a weak equivalence and $p$ is a fibration.
\end{lemma}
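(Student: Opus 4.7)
The plan is to treat $S(f)$ as the standard mapping path object of $f$ and to deduce both properties of the factorization from the simplicial model category axiom (SM7) applied to the projective model structure on $\cp(\ca)$ established in \cite{CD}. (I read the pullback diagram in Definition \ref{def2.7} with $A$ rather than $B$ in the bottom-left corner, since this is forced by $f\colon A\to B$; this appears to be a typographical slip.) First, I would construct $i\colon A\to S(f)$ from the universal property of the pullback by assembling $\id_A\colon A\to A$ with the "constant path'' map $A\xrightarrow{f} B\to B^{\Delta_1}$ induced by cotensoring the unique map $\Delta_1\to\Delta_0$ with $B$. The map $p\colon S(f)\to B$ is defined as $p=B^{j'}\circ r$, where $j'\colon\Delta_0\to\Delta_1$ picks out the vertex $1$. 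A direct diagram chase shows $q\circ i=\id_A$, $r\circ i$ is the constant path on $f$, and $p\circ i=f$, so this does factor $f$.

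For $i$ a weak equivalence, I would note that $j\colon\Delta_0\to\Delta_1$ is an anodyne extension in $\SSet$, so SM7 for the simplicial model structure on $\cp(\ca)$, applied to $B\to\ast$ (which is a fibration since $B$ is fibrant), implies that the cotensor $B^j\colon B^{\Delta_1}\to B$ is a trivial fibration. Pulling back along $f$ shows $q\colon S(f)\to A$ is a trivial fibration, and since $q\circ i=\id_A$, the two-out-of-three property forces $i$ to be a weak equivalence.

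For $p$ a fibration, I would use SM7 applied to the cofibration $\partial\Delta_1=\Delta_0\sqcup\Delta_0\hookrightarrow\Delta_1$ together with $B$ fibrant: the pushout–product (or rather pullback-cotensor) axiom gives that $(B^j,B^{j'})\colon B^{\Delta_1}\to B\times B$ is a fibration. Pulling this back along $(f,\id_B)\colon A\times B\to B\times B$ gives that $(q,p)\colon S(f)\to A\times B$ is a fibration. Finally the projection $A\times B\to B$ is a fibration, because it is the pullback of $A\to\ast$, which is a fibration since $A$ is fibrant. Composing, $p\colon S(f)\to B$ is a fibration.

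The only real obstacle is to be sure that the version of SM7 invoked above is valid in this context. Since the projective model structure on $\cp(\ca)$ from \cite{CD} is a simplicial model category with pointwise cotensors, and both $A$ and $B$ are fibrant (so all the auxiliary fibrancy hypotheses are met), the standard SM7 arguments apply verbatim and the construction of the mapping path object goes through. The functoriality of $S(-)$, pointed out in Remark~\ref{re2.8}, is then built into the universal property of the pullback and will be used in the sequel.
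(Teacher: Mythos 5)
Your proof is correct and follows essentially the same route as the paper: you build $i$ from the pullback universal property using the constant path, show $q$ is a trivial fibration since $B^j$ is (as $j$ is a trivial cofibration and $B$ is fibrant) and conclude by 2-out-of-3 from $qi=\id_A$, and obtain $p$ as the composite of the pullback of the endpoint-evaluation fibration $B^{\Delta_1}\to B\times B$ along $f\times\id_B$ with the projection $A\times B\to B$, exactly as in the paper's splitting of the defining pullback into two squares. Your reading of the typographical slip in Definition~\ref{def2.7} (the bottom-left corner should be $A$) agrees with what the paper's own proof uses, and your justification that the projection $A\times B\to B$ is a fibration because $A$ is fibrant is the intended one.
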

\begin{proof}
The pullback in \ref{def2.7} may be split in two pulbacks
$$
\xymatrix@=3pc{
S(f) \ar[r]^{r} \ar[d]_{q_1} & B^{\Delta_1}\ar[d]^{B^u}\\
A\times B \ar[r]^{f\times\id_B} \ar[d]_{q_2} & B\times B
\ar[d]^{B^v}\\
A\ar[r]_{f} &B
}
$$
where 
$$
\Delta_0 \xrightarrow{\quad  v\quad} \Delta_0 +\Delta_0 \xrightarrow{\quad u\quad} \Delta_1
$$
is the factorization of $j$. Since both $u$ and $v$ are cofibrations and $B$ is fibrant, the vertical morphisms $B^u$ and $B^v$ are fibrations
(see \cite{H} 9.3.9 (2a)). Moreover, since $j$ is a trivial cofibration, $B^j$ is a trivial fibration. Thus $q_1$  and $q_2$ are fibrations
and $q=q_2g_1$ is a trivial fibration.

Let $t$ denote the unique morphism $\Delta_1\to\Delta_0$. Since,
$$
B^jf^{\Delta_1}A^t=fA^jA^t=f,
$$
there is a unique morphism $i:A\to S(f)$ such that $qi=\id_A$ and $ri=f^{\Delta_1}A^t$. Since $q$ is a trivial fibration, $i$ is a weak
equivalence. Since $B^v:B\times B\to B$ is the first projection of the product, $q_2:A\times B\to A$ is the first projection as well. Let
$p_2:A\times B\to B$, $\overline{p}_2:B\times B\to B$ be the second projections and $v':\Delta_0\to\Delta_0+\Delta_0$ be the second injection
of the coproduct. Then $\overline{p}_2=B^{v'}$ and
\begin{align*}
p_2q_1i&=\overline{p}_2(f\times\id_B)q_1i=\overline{p}_2B^uri=\overline{p}_2B^uf^{\Delta_1}A^t=B^{v'}B^uf^{\Delta_1}A^t\\
&=B^{uv'}f^{\Delta_1}A^t=fA^{uv'}A^t=f.
\end{align*}
Since $B$ is fibrant, $p_2$ is a fibration and thus $p=p_2q_1$ is a fibration. We have $f=pi$.
\end{proof}

\begin{theo}\label{th2.10}
Let $\ca$ be a complete simplicial category and denote by $\cw$ the class of weak equivalences in the projective model structure on $\cp(\ca)$. 
Then $\cw$ is a class-accessible category strongly accessibly embedded in $\cp(\ca)^\to$.
\end{theo}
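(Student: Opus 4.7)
The strategy mirrors the combinatorial case (cf.~\cite{R2} or \cite{L}~A.2.6.6): I will exhibit $\cw$ as the preimage of the class of trivial fibrations under a strongly class-accessible endofunctor of $\cp(\ca)^\to$, using the tools assembled earlier in the section.

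First, I build a functor $\Phi:\cp(\ca)^\to\to\cp(\ca)^\to$ by composition. Given $f:A\to B$, apply the fibrant replacement of Lemma~\ref{le2.6} to obtain a morphism $\Fib(f):\Fib(A)\to\Fib(B)$ between fibrant objects, and then apply the functorial Serre construction of Definition~\ref{def2.7} to produce the factorization $\Fib(f)=p_f\circ i_f$ guaranteed by Lemma~\ref{le2.9}. Set $\Phi(f)=p_f$. I claim $f\in\cw$ iff $\Phi(f)$ is a trivial fibration: the functor $\Fib$ preserves weak equivalences pointwise (because $\Ex^\infty$ does) and the unit $\Id\to\Fib$ is a pointwise weak equivalence, so two-out-of-three gives $f\in\cw\Leftrightarrow\Fib(f)\in\cw$; applying two-out-of-three to $\Fib(f)=p_fi_f$ with $i_f\in\cw$ reduces this to $p_f\in\cw$; and since $p_f$ is already a fibration, this is equivalent to $p_f$ being a trivial fibration.

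Next, I verify that $\Phi$ is strongly class-$\lambda$-accessible for a sufficiently large uncountable regular $\lambda$. Lemma~\ref{le2.6} supplies this for $\Fib$. The Serre construction is built from pullbacks and cotensors with $\Delta_0,\Delta_1$, i.e.\ from finite weighted limits; by Lemma~\ref{le2.3} these preserve $\mu$-presentable objects for every uncountable regular $\mu$, and they commute with $\mu$-filtered colimits for every regular $\mu$. Hence the composition $\Phi$ is strongly class-accessible. It remains to show that the full subcategory of trivial fibrations is class-accessible and strongly class-accessibly embedded in $\cp(\ca)^\to$; then $\cw$, being its preimage under $\Phi$, inherits the same properties via the class-accessible analogue of the standard pullback stability result from \cite{CR}.

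The main obstacle is this last point. Trivial fibrations are characterized, via the cone-coreflective generating cofibrations $\ci$ of Proposition~\ref{prop2.4} (whose members have finitely presentable domains and codomains), as the morphisms with the right lifting property against $\ci$. In the classical combinatorial setting one concludes accessibility immediately because $\ci$ is a set; here $\ci$ is a proper class. However, cone-coreflectivity produces, for each morphism $f$, a set $\ci_f\subseteq\ci$ through which every commutative square from $\ci$ to $f$ factors, so the lifting condition is effectively controlled by a set of test morphisms whose domains and codomains are finitely presentable. Invoking the class-accessible orthogonality machinery of \cite{CR} converts this set-level description into a strongly class-accessible embedding of the subcategory of trivial fibrations into $\cp(\ca)^\to$, completing the argument.
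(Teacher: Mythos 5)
Your proposal is correct and follows essentially the same route as the paper: the paper's functor $R$ is exactly your $\Phi$ (Serre construction applied to the fibrant replacement of a map), weak equivalences are characterized as the preimage of trivial fibrations, strong class-accessibility of $R$ comes from Lemmas~\ref{le2.6} and \ref{le2.3}, and the final step is handled by citing Proposition~\ref{prop2.4} together with the results of \cite{CR} (4.9 for the class-accessibility of the trivial fibrations, 3.1--3.2 for the pseudopullback argument), which is precisely the cone-coreflectivity-based machinery you invoke.
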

\begin{proof}
Let $\Fib:\cp(\ca)\to\cp(\ca)$ be the strongly class-accessible fibrant replacement functor constructed in \ref{le2.6}. Consider the functor
$$
R:\cp(\ca)^\to\to\cp(\ca)^\to
$$
assigning to a morphism $f:A\to B$ the fibration $p:S(\Fib(f))\to\Fib(B)$ from \ref{le2.9}. Since the construction of $S(f)$ uses only finite limits, 
the functor $S(-):\cp(\ca)^\to\to\cp(\ca)$ is strongly class-accessible by \ref{le2.3}. Therefore the functor $R(-)$ is also strongly class-accessible.
A morphism $\alpha:F\to G$ in $\cp(\ca)$ is a weak equivalence if and only if $\Fib(f)$ is a weak equivalence, i.e., if and only if $R(f)$ is a trivial 
fibration.  
 
Let $\cf_0$ denote the full subcategory of $\cp(\ca)^\to$ consisting of trivial fibrations. Following \ref{prop2.4} and \cite{CR} 4.9, $\cf_0$ is 
class-$\lambda$-accessible and strongly class-$\lambda$-accessibly embedded in $\cp(\ca)^\to$ for every uncountable regular cardinal $\lambda$.
Since $\cw$ is given by the pullback
$$
\xymatrix@=4pc{
\ck^\to \ar[r]^{R} & \ck^\to \\
\cw \ar[u] \ar[r] &
\cf_0 \ar[u]
}
$$
whose vertical leg on the right is transportable, $\cw$ is equivalent to the pseudopullback (see \cite{CR} 3.2). Thus \cite{CR} 3.1 implies 
that $\cw$ is a class-accessible subcategory of $\cp(\ca)^{\to}$.
\end{proof}

\begin{defi}\label{def2.11}
{
\em
A class-combinatorial model category $\ck$ is \textit{nice} if the class of weak equivalences $\cw$ is a class-accessible, strongly class-accessibly 
embedded subcategory of $\ck^{\to}$.
}
\end{defi}

\begin{rem}\label{re2.12}
{
\em
We have just proved that $\cp(\ca)$ equipped with the projective model structure is  a nice model category for any complete simplicial category $\ca$. 
The same argument applies to every simplicial class-combinatorial model category which is equipped with a strongly class-accessible fibrant replacement 
functor and whose $\mu$-presentable objects are closed under finite weighted limits for each $\mu\geq\lambda$ (where $\lambda$ is a cardinal). 
We are not aware of any example of a class-combinatorial model category, which would fail to be nice. 
}
\end{rem}

\begin{theo}\label{th2.13}
Let $\ck$ be a locally class-$\lambda$-presentable category, $\ci$ a $\lambda$-bounded class of morphisms and $\cw$ a class of morphism of $\ck$ 
such that
\begin{enumerate}
\item[(1)] $\cw$ is a class-$\lambda$-accessible and strongly class-$\lambda$-accessibly embedded subcategory of $\ck^\to$ with the 2-out-of-3 property,
\item[(2)] $\ci^\square\subseteq\cw$, and
\item[(3)] $\cof(\ci)\cap\cw$ is closed under pushout and transfinite composition and cone-coreflective in $\ck^\to$.
\end{enumerate}
Then, taking $\cof(\ci)$ for cofibrations and $\cw$ for weak equivalences, we get a model category structure on $\ck$.
\end{theo}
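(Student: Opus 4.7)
Our plan is to follow J.~Smith's recognition theorem for combinatorial model categories, in the exposition of Beke, systematically replacing every appeal to the set-based small object argument by the class-$\lambda$-small-object argument of \cite{CR}, and every reference to an accessible class of morphisms by its class-$\lambda$-accessible analogue. Set cofibrations $=\cof(\ci)$, weak equivalences $=\cw$, fibrations $=(\cof(\ci)\cap\cw)^\square$, and trivial fibrations $=\ci^\square$. Two-out-of-three is hypothesis (1); retract-closure of each class will follow once the two weak factorization systems are in place.

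The first weak factorization system $(\cof(\ci),\ci^\square)$ is produced by the class-$\lambda$-small-object argument applied directly to the $\lambda$-bounded cone-coreflective class $\ci$. To identify $\ci^\square$ with fibrations $\cap\,\cw$, observe $\ci^\square\subseteq\cw$ by (2) and $\ci^\square\subseteq(\cof(\ci)\cap\cw)^\square$ by formal reasons. Conversely, if $f$ is both a fibration and a weak equivalence, factor $f=pj$ with $j\in\cof(\ci)$ and $p\in\ci^\square\subseteq\cw$; two-out-of-three gives $j\in\cw$, so $j$ is a trivial cofibration, $f$ lifts against $j$, and the standard retract argument places $f\in\ci^\square$.

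The second weak factorization system $(\cof(\ci)\cap\cw,\,\text{fibrations})$ is the main obstacle. The strategy is to introduce the subclass $\cj$ of $\cof(\ci)\cap\cw$ consisting of morphisms whose domain and codomain are $\lambda$-presentable, to check that $\cj$ is $\lambda$-bounded and cone-coreflective in $\ck^\to$, and then to show $\cof(\cj)=\cof(\ci)\cap\cw$ so that the class-$\lambda$-small-object argument applied to $\cj$ produces the required factorization. Clause (1) of $\lambda$-boundedness is by definition of $\cj$; clause (2) is obtained by truncation, using that $\ck$ is locally class-$\lambda$-presentable and that the first weak factorization system already admits $\lambda$-presentable middle objects. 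Cone-coreflectivity of $\cj$ is the deepest step and is precisely where the hypothesis that $\cw$ is class-$\lambda$-accessible and strongly class-$\lambda$-accessibly embedded in $\ck^\to$ becomes essential: given a target $f$, one expresses any incoming square $g\to f$ with $g\in\cj$ in terms of maps out of $\lambda$-presentable objects and factors through a $\lambda$-presentable approximation of $f$ inside $\cof(\ci)\cap\cw$, using the class-accessibility of both $\cw$ and $\cof(\ci)$ in $\ck^\to$. This reproduces Smith's solution-set construction in the class-accessible regime. Finally, the inclusion $\cof(\cj)\subseteq\cof(\ci)\cap\cw$ is hypothesis (3); for the reverse inclusion one factors any $f\in\cof(\ci)\cap\cw$ as $f=pj$ via $\cj$, uses two-out-of-three to place $p\in\cw$, invokes a further class-accessible solution-set argument to upgrade $p\in\cj^\square$ to $p\in\ci^\square$, and then lifts $f\in\cof(\ci)$ against $p$ to exhibit $f$ as a retract of $j\in\cof(\cj)$, as required.
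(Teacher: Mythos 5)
Your overall strategy coincides with the paper's: build $(\cof(\ci),\ci^\square)$ from the $\lambda$-bounded class $\ci$, isolate a class $\cj$ of trivial cofibrations with $\lambda$-presentable domains and codomains, establish the Beke--Smith solution-set condition so that $\cof(\cj)=\cof(\ci)\cap\cw$, and run the class small object argument of \cite{CR} on $\cj$. The genuine gap is at the step you yourself call the deepest: cone-coreflectivity of $\cj$. You claim it follows from class-accessibility of $\cw$ and of $\cof(\ci)$ in $\ck^\to$. First, class-accessibility of $\cof(\ci)$ is not among the hypotheses and is not available. Second, and more seriously, accessibility of $\cw$ gives no control over squares $g\to f$ with $g\in\cj$ when the target $f$ is an \emph{arbitrary} morphism of $\ck$: the $\lambda$-filtered decompositions provided by class-accessibility live inside $\cw$, while $f$ need not lie in $\cw$, so there is no ``$\lambda$-presentable approximation of $f$ inside $\cof(\ci)\cap\cw$'' to factor through. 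In Smith's classical theorem this issue is invisible because $\cj$ may be taken to be a set; here $\cj$ is a proper class, and taming it is precisely what the cone-coreflectivity clause of hypothesis (3) is for. The paper's proof uses that clause: given $g\to f$ with $g\in\cj\subseteq\cof(\ci)\cap\cw$, one first factors through a member $h$ of the \emph{set} $\cc_f$ supplied by (3); only then, since $h\in\cw$ and $g$ is $\lambda$-presentable in $\ck^\to$ (strong accessibility of the embedding), does one factor $g\to h$ through a $\lambda$-presentable object $w'$ of $\cw$, factor $w'$ into a cofibration between $\lambda$-presentables followed by a map in $\ci^\square$, and lift against $g\in\cof(\ci)$. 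Your argument never invokes that clause, so as written it would render it redundant, contrary to the concern the authors raise in Remark~\ref{re2.14}; that is a strong indication the step fails as described.

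A smaller overreach: you assert that $\cj$ is $\lambda$-bounded, with clause (2) obtained ``by truncation''. The paper establishes (and needs) only that $\cj$ is cone-coreflective with $\lambda$-presentable domains, which is what \cite{CR} 4.3 requires to produce the weak factorization system $(\cof(\cj),\cj^\square)$; Remark~\ref{re2.14} states explicitly that only condition \cite{CR} 4.7 (1) is known for $\cj$, which is exactly why the resulting model structure is not claimed to be class-combinatorial. Apart from these points, your identification of $\ci^\square$ with the trivial fibrations and the retract argument deducing $\cof(\ci)\cap\cw\subseteq\cof(\cj)$ from the solution-set condition are the standard steps of Lemma 1.8 of \cite{B} and agree with the paper; the solution-set condition itself (every square from $i\in\ci$ to $w\in\cw$ factors through $\cj$) is correctly located, and it is there that class-accessibility of $\cw$ and the strong accessibility of its embedding are genuinely used.
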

\begin{proof}
Since $\ci$ is $\lambda$-bounded, $(\cof(\ci),\ci^\square)$ is a cofibrantly class-$\lambda$-ge\-ne\-ra\-ted weak factorization system.
For every $\lambda$-presentable $w\in\cw$, we construct a factorization in $\ck$ into a cofibration $j$ followed by a trivial fibration. By 2-out-of-3 
property for $\cw$ and (2), $j$ is in $\cw$. Let $\cj$ be the class of these morphisms $j$ for all $\lambda$-presentable $w\in\cw$.
 
We will check now the conditions of Lemma~\cite[1.8]{B}. We have to show that for every morphism $i\to w$ in $\ck^\to$ with $i\in \ci$ and $w\in\cw$ 
there exists $j\in \cj$ that factors it $i\to j\to w$. First note that there exists a $\lambda$-presentable $w'\in\cw$, which factors the original 
morphism, since every $w$ is a $\lambda$-filtered colimit of $\lambda$-presentable objects $\cw$ and every $i\in I$  is $\lambda$-presentable 
in $\ck^\to$; we used here that the inclusion of $\cw$ to $\ck^\to$ preserves $\lambda$-presentable objects. Next, decompose that morphism $w'$ 
into a cofibration $j\in\cj$ followed by a trivial fibration. The lifting axiom in $\ck$ finishes this argument.

Lemma~\cite[1.8]{B} implies that $\cof\cj= \cof\ci\cap\cw$. The requirement that $\cof\ci\cap\cw$ is cone-coreflective in $\ck^{\to}$ 
ensures that $\cj$ is cone-coreflective as well (by the same argument as above). By construction, the domains of all the elements in $\cj$ 
are $\lambda$-presentable. Hence $\cj$ satisfies the assumptions of \cite{CR} 4.3 and thus $(\cof(\cj),\cj^\square)$ is a weak factorization system.
Since $\cw$ is closed under retracts in $\ck^\to$ (cf. \cite{AR} 2.4 and 2.5), we get a model category structure on $\ck$. 
\end{proof} 

\begin{rem}\label{re2.14}
{\em
Let $\ck$ be a locally presentable category, $\ci$ a set of morphisms and $\cw$ a class of morphism of $\ck$
such that
\begin{enumerate}
\item[(1)] $\cw$ has the 2-out-of-3 property and is closed under retracts in $\ck^\to$,
\item[(2)] $\ci^\square\subseteq\cw$, and
\item[(3)] $\cof(\ci)\cap\cw$ is closed under pushout and transfinite composition.
\end{enumerate}
Then, taking $\cof(\ci)$ for cofibrations and $\cw$ for weak equivalences, we get a combinatorial model category
if and only if the inclusion of $\cw$ in $\ck^\to$ is accessible. This is the content of the Smith's theorem 
(see \cite{B} for sufficiency and \cite{L} or \cite{R2} for necessity). 

We do not know whether this can be generalized to class-accessible setting and \ref{th2.13} is what we are able to do.  
The question is whether cone-coreflectivity of $\cof(\ci)\cap\cw$ follows from the other assumptions. We also do not know whether
the model category in \ref{th2.13} is class-combinatorial. Indeed, we only know that the class $\cj$ satisfies \cite{CR} 4.7 (1). 
}
\end{rem}
 
Homotopy equivalences can be defined in any category $\ck$ with finite coproducts which is equipped with a weak 
factorization system $(\cl,\crr)$ (see \cite{R2}). Recall that a \textit{cylinder object} $C(K)$ of an object $K$ 
is given by an $(\cl,\crr)$ factorization of the codiagonal
$$
\nabla : K+K \xrightarrow{\quad  \gamma_K\quad} C(K)
             \xrightarrow{\quad \sigma_K\quad} K
$$
We denote by
$$
\gamma_{1K},\gamma_{2K}:K\to C(K)
$$ 
the compositions of $\gamma_K$ with the coproduct injections. 

Then, as usual, we say that morphisms $f,g:K\to L$ are \textit{homotopic}, and write $f\sim g$, if there is a morphism 
$h:C(K)\to L$ such that the following diagram commutes
$$
\xymatrix@=3pc{
K+K \ar[rr]^{(f,g)}
\ar[dr]_{\gamma_K} && L\\
& C(K) \ar[ur]_h
}
$$
Here, $(f,g)$ is induced by $f$ and $g$. The homotopy relation  $\sim$ is clearly reflexive, symmetric, 
compatible with the composition and does not depend on the choice of a cylinder object. But, it is not transitive 
in general and we will denote its transitive hull by $\approx$. We get the quotient functor
$$
Q:\ck\to\ck/\approx.
$$

A morphism $f:K\to L$ is called a \textit{homotopy equivalence} if $Qf$ is the isomorphism, i.e., if there exists
$g:L\to K$ such that both $fg\approx\id_L$ and $gf\approx\id_K$. The full subcategory of $\ck^\to$ consisting
of homotopy equivalences w.r.t. a weak factorization system $(\cl,\crr)$ will be denoted by $\ch_\cl$. The following
result generalizes \cite{R2}, 3.8.

\begin{propo}\label{prop2.15}
Let $\ck$ be a locally class-presentable category and $(\cl,\crr)$ be a weak factorization system with a strongly class-accessible cylinder
functor. Then $\ch_\cl$ is a full image of a strongly class-accessible functor into $\ck^\to$.
\end{propo}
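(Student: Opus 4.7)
The plan is to encode ``$f$ equipped with a choice of homotopy inverse and explicit chains of cylinder homotopies'' as a class-accessible category $\cd$ together with a strongly class-accessible projection $U:\cd\to\ck^{\to}$ whose full image is $\ch_{\cl}$.

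For each pair $(n,m)\in\omega\times\omega$, I introduce the category $\cd_{n,m}$ whose objects are tuples consisting of a morphism $f:K\to L$, a candidate inverse $g:L\to K$, morphisms $u_0,\dots,u_n:L\to L$ with $u_0=fg$ and $u_n=\id_L$, morphisms $v_0,\dots,v_m:K\to K$ with $v_0=gf$ and $v_m=\id_K$, together with cylinder homotopies $H_i:C(L)\to L$ satisfying $H_i\gamma_{1L}=u_{i-1}$ and $H_i\gamma_{2L}=u_i$ for $1\leq i\leq n$, and $G_j:C(K)\to K$ satisfying $G_j\gamma_{1K}=v_{j-1}$ and $G_j\gamma_{2K}=v_j$ for $1\leq j\leq m$. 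Morphisms in $\cd_{n,m}$ are pairs $(K\to K',\,L\to L')$ making every piece of the diagram commute, and I take $U_{n,m}:\cd_{n,m}\to\ck^{\to}$ to be the forgetful functor remembering only $f$.

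Each $\cd_{n,m}$ is built as an iterated pseudopullback from finitely many copies of $\ck$ and $\ck^{\to}$ and finitely many uses of the cylinder functor $C:\ck\to\ck$: every free arrow contributes a factor of $\ck^{\to}$; every composition equation such as $u_0=fg$ or $u_n=\id_L$ is imposed by pseudopullback along evaluation functors; every boundary equation for $H_i$ or $G_j$ is imposed by pseudopullback along a leg of the form $(-)\gamma_{kL}$ or $(-)\gamma_{kK}$ derived from $C$. Since $\ck$ is locally class-presentable, $C$ is strongly class-accessible by hypothesis, and composition in a locally class-presentable category is itself strongly class-accessible, the pseudopullback closure result \cite{CR}~3.1 propagates class-accessibility through each step and shows that every projection leg is strongly class-accessible. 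Hence $\cd_{n,m}$ is class-$\lambda_{n,m}$-accessible and $U_{n,m}$ is strongly class-$\lambda_{n,m}$-accessible for some regular cardinal $\lambda_{n,m}$.

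Set $\cd=\coprod_{(n,m)\in\omega\times\omega}\cd_{n,m}$ and let $U:\cd\to\ck^{\to}$ restrict to $U_{n,m}$ on each summand. Choosing a regular cardinal $\lambda$ dominating all $\lambda_{n,m}$, which is possible since there are only countably many of them, and invoking the fact that strong class-$\lambda_{n,m}$-accessibility propagates up to $\lambda$ (cf.\ the remark after Definition~\ref{def2.1}), each $U_{n,m}$ becomes strongly class-$\lambda$-accessible, and hence so is $U$. Because $\approx$ is the transitive closure of the symmetric relation $\sim$, a morphism $f$ lies in $\ch_{\cl}$ if and only if it fits into the data of some $\cd_{n,m}$; therefore the full image of $U$ is exactly $\ch_{\cl}$. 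The main difficulty is upgrading mere class-accessibility to \emph{strong} class-accessibility at each step of the pseudopullback construction, which requires all legs (evaluation, composition, and the boundary functors obtained from $C$) to be strongly class-accessible; once this is granted, \cite{CR}~3.1 carries the property through the iteration.
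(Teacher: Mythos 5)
Your overall architecture is the same as the paper's: encode a homotopy inverse $g$ together with finite chains of cylinder homotopies as an auxiliary category of tuples built from $\ck$, $\ck^\to$ and the cylinder functor $C$, show this category is class-accessible with a strongly class-accessible forgetful functor to $\ck^\to$, uniformize over chain lengths (your coproduct over $(n,m)$ versus the paper's directed union over a single $n$ is an inessential difference, since constant homotopies allow padding), and conclude that the full image is $\ch_\cl$ because $\approx$ is the transitive hull of the symmetric relation $\sim$.

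The weak point is the claim that the equations ($u_0=fg$, $u_n=\id_L$, and above all the boundary conditions $H_i\gamma_{1L}=u_{i-1}$, $H_i\gamma_{2L}=u_i$) are ``imposed by pseudopullback along evaluation functors'' or along legs of the form $(-)\gamma_{kL}$. A pseudopullback does not impose an equality: its objects carry specified isomorphisms, so this construction only produces tuples in which $H_i\gamma_{1L}$ is isomorphic to $u_{i-1}$ in $\ck^\to$, i.e.\ conjugate to it by isomorphisms of $L$. With such twists at every link, the chain no longer witnesses $fg\approx\id_L$ (at best $fg\approx$ some isomorphism), so the full image of your $U$ is not obviously $\ch_\cl$. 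The constructions that impose the extra morphism data and the equations strictly, while keeping control of accessibility, are the inserter and the equifier, and that is exactly what the paper invokes: the category of tuples is produced by the inserter result of \cite{CR} (3.9), which also builds the compatibility conditions $uH_i=H_i'C(u)$ into the morphisms, and the full subcategory cut out by the boundary equations comes from the equifier result of \cite{CR} (3.7); both need only the strong class-accessibility of $C$. Your pseudopullback-only route can be repaired, but only by adding the missing comparison: the relevant strict pullbacks (which do impose the equations) are taken along transportable legs, such as $(\dom,\cod)\colon\ck^\to\to\ck\times\ck$ or the diagonal into the category of parallel pairs, hence are equivalent to the corresponding pseudopullbacks, so that \cite{CR} 3.1--3.2 apply -- the same transportability argument the paper runs in Theorem~\ref{th2.10}. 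Finally, note that passing from the $\lambda_{n,m}$ to a common $\lambda$ requires the sharp relation $\lambda\trianglerighteq\lambda_{n,m}$ (as in the remark following Definition~\ref{def2.1}), not mere inequality; for countably many cardinals such a $\lambda$ exists, so this point is only a matter of wording.
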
  

\begin{proof}
Given $n<\omega$, let $\cm_n$ be the category whose objects are $(4n+2)$-tuples
$$
\tau =(f,g,a_1,\dots,a_n,b_1,\dots,b_n,h_1,\dots,h_n,k_1,\dots,k_n)
$$
of morphisms $f:A\to B$, $g:B\to A$, $a_1,\dots,a_n:A\to A$, $b_1,\dots,b_n:B\to B$, $h_1,\dots,h_n:C(A)\to A$
and $k_1,\dots,k_n:C(B)\to B$. Morphisms are pairs $(u,v)$ of morphisms $u:A\to A'$ and $v:B\to B'$ such that
$f'u=vf$, $g'v=ug$, $uh_i=h'_iC(u)$ and $vk_i=k'_iC(v)$ for $i=1,\dots,n$. This category is obtained by an inserter 
construction inserting our $n+2$ morphisms among $\Id$ and $C$. Since $C$ is strongly class-accessible, the procedure 
of the proof of \cite{CR} 3.9 yields that $\cm_n$ is a class-accessible category. 

Let $\overline{\cm}_n$ be the full subcategory of $\cm_n$ such that $h_1\gamma_A=(gf,a_1)$, $h_i\gamma_A=(a_i,a_{i+1})$, 
$h_n\gamma_n=(a_n,\id_A)$, $k_1\gamma_A=(fg,b_1)$, $k_i\gamma_A=(b_i,b_{i+1})$ and $k_n\gamma_n=(b_n,\id_B)$ where $1<i<n$. 
This category is obtained from $\cm_n$ by an equifier construction and, by the same reason as above, the procedure
of the proof of \cite{CR} 3.7 yields that $\overline{\cm}_n$ is class-accessible.  Moreover, its inclusion into $\cm_n$
is strongly accessible.

We have full embeddings
$$
M_{m,n}:\cm_m\to\cm_n,
$$
for $m<n$, which take the missing $a_i,b_i,h_i,k_i $ as the identities. The union $\cm$ of all $\cm_n$'s is a class-accessible 
category. Since all $\overline{\cm}_n$'s are strongly accessibly embedded to $\cm_n$, $\overline{\cm}$ is strongly accessible 
embedded by to $\overline{\cm}$. Let 
$$
F:\overline{\cm}\to\ck^\to
$$
sends each $(4n+2)$-tuple above to $f$. This is a strongly class-accessible functor whose image is $\ch_\cl$.
\end{proof}

\section{Left Bousfield localizations}
Recall that $\tilde h$ is a \textit{cofibrant approximation} of $h$ if there is a commutative square
$$
\xymatrix@=4pc{
A \ar [r]^{v} \ar [d]_{h}& \tilde{A} \ar [d]^{\tilde{h}}\\
B\ar [r]_{w}& \tilde{B}
}
$$
where $v$ and $w$ are weak equivalences. 

\begin{defi}\label{def3.1}
{
\em
Let $\ck$ be a class-combinatorial simplicial model category and $\cal F$ a class of morphisms of $\ck$. Assume 
that $\cal F$ contains only cofibrations between cofibrant objects. An object $K$ in $\ck$ is called 
$\cal F$-\textit{local} if it is fibrant and
$$
\hom(f,K)\colon\hom(B,K)\to\hom(A,K)
$$
is a weak equivalence of simplicial sets for each $f\colon A\to B$ in $\cf$.

A morphism $h$ of $\ck$ is called an $\cf$-\textit{local equivalence} if $\hom(\tilde h,K)$ is a weak equivalence for each 
$\cf$-local object $K$; here, $\tilde h$ is a cofibrant approximation of $h$. 

The full subcategory of $\ck$ consisting of $\cf$-local objects is denoted $\Loc(\cf)$ and the full subcategory 
of $\ck^\to$ consisting of $\cf$-local equivalences is denoted $\LEq(\cf)$.

We say that there exists a \textit{left Bousfield localization} of $\cf$ if cofibrations in $\ck$ and $\cf$-local equivalences
form a model category structure on $\ck$. 
}
\end{defi}

\begin{rem}\label{re3.2}
{
\em
(1) It is easy to see that the definition of a local $\cf$-equivalence does not depend on the choice of a cofibrant approximation.

(2) Following \cite{H}, 9.3.3 (2), any weak equivalence in $\ck$ is an $\cf$-local equivalence. On the other hand, every
$\cf$-local equivalence between $\cf$-local objects is a weak equivalence in $\ck$ (cf. \cite{GJ} X.2.1. 2)).

(3) If $\ck$ is left proper then the intersection of cofibrations and $\cf$-local equivalences is closed under pushout and transfinite 
composition (see \cite{H}, 13.3.10, 17.9.4 for a non-trivial part of the proof); the trivial part is that $\hom(-,K)$ sends colimits
to limits and cofibrations to fibrations. It is also closed under retracts in $\ck^\to$ of course.
}
\end{rem}
  
Given a morphism $f$, $\{f\}$-local objects are called $f$-local and analogously for $f$-local equivalences.
The corresponding categories are called $\Loc(f)$ and $\LEq(f)$.

\begin{propo}\label{prop3.3}
Let $\ck$ be a class-combinatorial simplicial model category and $\cf$ a set of cofibrations between cofibrant 
objects of $\ck$. Then  $\Loc(\cf)$ is a class-accessible category strongly accessibly embedded in $\ck$.
\end{propo}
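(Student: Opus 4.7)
The plan is to reduce the $\cf$-locality condition to a right-lifting condition against a \emph{set} of pushout-product cofibrations and then to combine it with fibrancy (right-lifting against the cone-coreflective class $\cj$ of generating trivial cofibrations of $\ck$) via the pseudopullback machinery of \cite{CR}.

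For each $f\colon A\to B$ in $\cf$, which is by assumption a cofibration between cofibrant objects, and each fibrant $K$, SM7 makes $\hom(f,K)\colon\hom(B,K)\to\hom(A,K)$ a fibration of simplicial sets. Hence $\hom(f,K)$ is a weak equivalence iff it is a trivial fibration iff it has the RLP against every boundary inclusion $i_n\colon\partial\Delta_n\to\Delta_n$, $n\geq 0$. By the pushout-product--$\hom$ adjunction this is equivalent to $K$ having the RLP against the set
$$
\ci_\cf \;=\;\{\,f\,\square\, i_n \mid f\in\cf,\; n\geq 0\,\}.
$$
Consequently, $K\in\Loc(\cf)$ if and only if the terminal map $K\to 1$ lies in $(\cj\cup\ci_\cf)^\square\subseteq\ck^\to$.

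Now I would form the pullback
$$
\xymatrix@=3pc{
\Loc(\cf) \ar[r]\ar[d] & (\cj\cup\ci_\cf)^\square \ar[d]\\
\ck \ar[r]_-{\Delta} & \ck^\to
}
$$
where $\Delta(K)=(K\to 1)$. The union $\cj\cup\ci_\cf$ remains cone-coreflective with $\lambda$-presentable domains and codomains for a sufficiently large regular $\lambda$, so by \cite{CR} 4.9 the subcategory $(\cj\cup\ci_\cf)^\square$ is class-accessible and strongly class-accessibly embedded in $\ck^\to$. The functor $\Delta$ is strongly class-accessible because $\lambda$-filtered colimits of a constant-terminal diagram again compute $1$, and the right-hand leg is a full replete embedding, hence transportable. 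Therefore the pullback coincides with the pseudopullback, and \cite{CR} 3.1 identifies $\Loc(\cf)$ as a class-accessible subcategory strongly class-accessibly embedded in $\ck$.

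The main obstacle is ensuring that the combined class $\cj\cup\ci_\cf$ satisfies the hypotheses of \cite{CR} 4.7 needed to invoke \cite{CR} 4.9; the delicate condition is that morphisms between $\lambda$-presentable objects should admit weak factorizations with $\lambda$-presentable middle term. If that verification is awkward, it can be bypassed by forming a two-step pseudopullback, applying \cite{CR} 4.9 to $\cj^\square$ and the classical accessible-injectivity result to the set-generated $\ci_\cf^\square$, and then intersecting the two class-accessible subcategories of $\ck^\to$.
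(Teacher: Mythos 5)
Your reduction of $\cf$-locality to lifting of $K\to 1$ against $\cj\cup\{f\square i_n\}$ is fine (it is essentially Lemma~\ref{le3.6}), but the step that carries the whole argument is precisely the one you leave open: to apply \cite{CR} 4.9 to $(\cj\cup\ci_{\cf})^\square$ you need the union to generate a cofibrantly \emph{class}-generated weak factorization system, i.e.\ besides cone-coreflectivity and presentability of (co)domains you need the boundedness condition \cite{CR} 4.7~(2) (morphisms between $\lambda$-presentable objects factor with $\lambda$-presentable middle object). This is not automatic, not even for the set $\ci_{\cf}$ alone: in a locally class-presentable category the hom-sets between $\lambda$-presentable objects need not have cardinality $<\lambda$, so the small object argument does not stay inside the $\lambda$-presentables without extra work. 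Indeed, in this paper boundedness is verified by hand in the proof of \ref{prop2.4}, checked ad hoc in Example~\ref{ex3.11}, and simply \emph{assumed} as a hypothesis on $\Hor(\cf)$ in Remark~\ref{re3.12}; Proposition~\ref{prop3.3} has no such hypothesis, so a proof via your route would prove a weaker statement. Your proposed bypass does not repair this: the ``classical accessible-injectivity result'' is a theorem about locally presentable categories and is not available here, and its class-analogue for $\ci_{\cf}^\square$ is again \cite{CR} 4.9, i.e.\ it needs the same boundedness. Two smaller points: strong class-accessibility of $K\mapsto (K\to 1)$ requires the terminal object to be $\lambda$-presentable, and you never address presentability of the domains and codomains of the maps in $\cf$ (the proposition does not assume it); both need an argument or a raising-of-$\lambda$ discussion.

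For contrast, the paper avoids all lifting/factorization considerations: for each single $f\in\cf$ it forms the pullback of the (accessible, accessibly embedded, transportable) subcategory $\cw\subseteq\SSet^\to$ of simplicial weak equivalences along the functor $\hom(f,-)\colon\ck\to\SSet^\to$, identifies it with the pseudopullback via \cite{CR} 3.1--3.2 to get that $\Loc(f)$ is class-accessible and strongly accessibly embedded in $\ck$, and then intersects over the \emph{set} $\cf$ using \cite{CR} 3.3. That is why no boundedness or cone-coreflectivity enters the statement. If you want to salvage your approach, you would either have to add boundedness of $\Hor(\cf)$ as a hypothesis (landing near Remark~\ref{re3.12} rather than Proposition~\ref{prop3.3}) or replace the injectivity step by a pullback along an accessible functor as the paper does.
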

\begin{proof}
$$
\xymatrix@=4pc{
\ck \ar[r]^{\hom(f,-)} & \SSet^\to \\
\Loc(f) \ar [u]^{} \ar [r]_{} &
\cw \ar[u]_{}
}
$$
is a pullback where $\cw$ denotes weak equivalences in $\SSet$. Since the vertical leg on the right is transportable, $\Loc(f)$ is a pseudopullback
and thus it is a class-accessible and its inclusion to $\ck$ is strongly class-accessible (see \cite{CR} 3.1, 3.2 and \ref{th2.10}). Since 
$$
\Loc(\cf)=\bigcap\limits_{f\in\cf} \Loc(f),
$$ 
the result follows from \cite{CR} 3.3.
\end{proof}

\begin{defi}\label{def3.4}
{
\em
Let $\ck$ be a simplicial model category and $f:A\to B$ a cofibration of cofibrant objects. Consider a pushout
$$
\xymatrix@=4pc{
\partial\Delta_n\otimes A \ar [r]^{\id\otimes f} \ar [d]_{i_n\otimes\id}& \partial\Delta_n\otimes B \ar [d]^{p_{n1}}\\
\Delta_n\otimes A\ar [r]_{p_{n2}}& P_{f,n}
}
$$
where $i_n:\partial\Delta_n\to\Delta_n$ is the inclusion of the boundary into a simplex. Let 
$h_{f,n}:P_{f,n}\to \Delta_n\otimes B$ be the canonical morphism, which is a cofibration since $\ck$ is simplicial. 

Cofibrations $h_{f,n}$, $n=0,1,\dots$ are called $f$-\emph{horns}. If $\cf$ is a class of cofibrations, then 
we denote by $\Hor(\cf)$ the collection of all $f$-horns, for all $f\in \cf$.
}
\end{defi}

\begin{rem}\label{re3.5}
{
\em
Every $h_{f,n}\in \Hor(\cf)$ is an $\cf$-local equivalence because the morphism 
$$
\hom(h_{f,n},K):\hom(\Delta_{n}\otimes B,K)\to \hom(P_{f,n},K)
$$
is a weak equivalence for every $\cf$-local object $K$. In fact, the morphism 
$$
\hom(\id\otimes f,K):\hom(\Delta_n\otimes B,K)\to\hom(\Delta_n\otimes A,K)
$$ 
is a weak equivalence because $K$ is $\cf$-local and $\hom(p_{n,2},K)$ is a trivial fibration as a pullback of the trivial fibration 
$$
\hom(\id\otimes f,K):\hom(\partial\Delta_n\otimes B,K)\to \hom(\partial\Delta_n\otimes A,K).
$$
Thus it suffices to use the 2-out-of-3 property.

We used the fact that $f$-horns are cofibrations between cofibrant objects and that the definition of an $\cf$-local equivalence does not depend
on the choice of a cofibrant approximation.
}
\end{rem}

\begin{lemma}\label{le3.6}
Let $\ck$ be a class-combinatorial simplicial model category and $\cf$ a class of cofibrations between cofibrant
objects of $\ck$. Then a fibrant object $K$ of $\ck$ is $\cf$-local if and only if it is injective to all $f$-horns 
for $f\in\cf$.
\end{lemma}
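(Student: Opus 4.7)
The plan is to translate the injectivity statement on the horn side into a right-lifting-property statement on the hom-side via the usual tensor-hom adjunction, and then to use SM7 to see that the two candidate conditions on $\hom(f,K)$ — being a weak equivalence or being a trivial Kan fibration — are equivalent for fibrant $K$.

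First I would unpack what it means for $K$ to be injective to $h_{f,n}$. By the pushout defining $P_{f,n}$, a morphism $P_{f,n}\to K$ is exactly a compatible pair consisting of $\partial\Delta_n\otimes B\to K$ and $\Delta_n\otimes A\to K$. Under the simplicial tensor–cotensor adjunction these correspond respectively to $\partial\Delta_n\to\hom(B,K)$ and $\Delta_n\to\hom(A,K)$, whose compatibility amounts to a commutative square
$$
\xymatrix@=3pc{
\partial\Delta_n \ar[r] \ar[d]_{i_n} & \hom(B,K) \ar[d]^{\hom(f,K)}\\
\Delta_n \ar[r] & \hom(A,K).
}
$$
An extension $\Delta_n\otimes B\to K$ through $h_{f,n}$ then corresponds to a diagonal filler $\Delta_n\to\hom(B,K)$ in this square. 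Thus $K$ is injective to $h_{f,n}$ for all $n\geq 0$ if and only if $\hom(f,K)$ has the right lifting property with respect to every boundary inclusion $i_n:\partial\Delta_n\to\Delta_n$, i.e.\ is a trivial Kan fibration.

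Next I would invoke SM7: because $f:A\to B$ is a cofibration between cofibrant objects and $K$ is fibrant, the map $\hom(f,K):\hom(B,K)\to\hom(A,K)$ is automatically a Kan fibration of simplicial sets. Therefore $\hom(f,K)$ is a trivial Kan fibration if and only if it is a weak equivalence of simplicial sets. Combining this with the previous equivalence, injectivity of $K$ to all $f$-horns is the same as $\hom(f,K)$ being a weak equivalence, which is precisely $\cf$-locality of the fibrant object $K$. Running this argument uniformly in $f\in\cf$ yields both directions simultaneously.

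There is no real obstacle here; the only point deserving a line of comment is that cofibrancy of the domains and codomains of $f$ ensures, together with fibrancy of $K$, that $\hom(f,K)$ is a fibration, so that Ken Brown's argument (a fibration between fibrant objects is a weak equivalence iff it is a trivial fibration) applies, and simultaneously guarantees that the tensor-hom adjunction delivers the mapping spaces having the correct homotopical meaning; neither uses anything beyond the standard simplicial model category axioms already available.
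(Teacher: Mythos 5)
Your argument is correct and is essentially the paper's own proof: use SM7 to see that $\hom(f,K)$ is a fibration for fibrant $K$, so $\cf$-locality amounts to $\hom(f,K)$ being a trivial fibration, and then translate the lifting property against the boundary inclusions $i_n$ through the tensor--hom adjunction into injectivity with respect to the $f$-horns. The only cosmetic remark is that the step ``a fibration is a weak equivalence iff it is a trivial fibration'' is just the definition of trivial fibration in $\SSet$, not Ken Brown's lemma, and it needs no fibrancy of the objects involved.
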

\begin{proof}
Since each $f\in\cf$ is a cofibrations, $\hom(f,K)$ is a fibration for each fibrant object $K$. Thus 
a fibrant object $K$ is $\cf$-local if and only if $\hom(f,K)$ is a trivial fibration for each $f\in\cf$. 
This is the same as having the right lifting property with respect to each inclusion $i_n:\partial\Delta_n\to\Delta_n$. 
The latter is clearly equivalent to being injective with respect to $f$-horns $h_{f,n}$ for all $f\in\cf$.
\end{proof} 

\begin{lemma}\label{le3.7}
Let $\cf$ be a cone-coreflective class of cofibrations between $\lambda$-presentable cofibrant objects. Then $\Hor(\cf)$ 
is cone-coreflective class of morphisms between $\lambda$-presentable objects.
\end{lemma}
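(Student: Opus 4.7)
The plan is to verify the two assertions in the conclusion separately: first that the domain and codomain of every $f$-horn are $\lambda$-presentable, and second that cone-coreflectivity of $\cf$ in $\ck^\to$ transfers to $\Hor(\cf)$.

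For the $\lambda$-presentability, observe that $\Delta_n$ is a finite simplicial set, hence a finite colimit of copies of $\Delta_0$ (built from its finitely many non-degenerate simplices). Because $\Delta_n\otimes(-)$ is a left adjoint, $\Delta_n\otimes B$ is a finite colimit of copies of $\Delta_0\otimes B\cong B$, and so is $\lambda$-presentable whenever $B$ is; the same reasoning applies to $\partial\Delta_n\otimes B$, $\Delta_n\otimes A$, and $\partial\Delta_n\otimes A$. Consequently $P_{f,n}$, being a pushout of three $\lambda$-presentable objects, is itself $\lambda$-presentable, as is its codomain $\Delta_n\otimes B$.

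For cone-coreflectivity of $\Hor(\cf)$, fix an arbitrary morphism $h:X\to Y$ in $\ck$ and, for each $n\geq 0$, form the Leibniz pullback-cotensor
$$
\hat h_n : X^{\Delta_n}\longrightarrow X^{\partial\Delta_n}\times_{Y^{\partial\Delta_n}} Y^{\Delta_n}
$$
induced by the boundary inclusion $i_n$ and the morphism $h$. Unwinding the pushout defining $P_{f,n}$ and applying the tensor-cotensor adjunction to both legs of the square yields a natural bijection
$$
\ck^\to(h_{f,n},\,h)\;\cong\;\ck^\to(f,\,\hat h_n)
$$
for every $f\in\cf$ and every $n$. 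Now apply cone-coreflectivity of $\cf$ at the object $\hat h_n\in\ck^\to$: there is a set $\cf_{\hat h_n}\subseteq\cf$ such that every $f\to\hat h_n$ with $f\in\cf$ factors as $f\to f'\to\hat h_n$ for some $f'\in\cf_{\hat h_n}$. Transporting back through the adjunction, every $h_{f,n}\to h$ in $\ck^\to$ factors as $h_{f,n}\to h_{f',n}\to h$ for some $f'\in\cf_{\hat h_n}$. Hence
$$
\Hor(\cf)_h\;:=\;\bigcup_{n<\omega}\bigl\{\,h_{f',n}\;:\;f'\in\cf_{\hat h_n}\,\bigr\}
$$
is a set (a countable union of sets) witnessing cone-coreflectivity of $\Hor(\cf)$ at $h$.

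The main step requiring careful verification is the Hom-set bijection $\ck^\to(h_{f,n},h)\cong\ck^\to(f,\hat h_n)$; this is a standard Leibniz-style calculation combining the universal property of the pushout $P_{f,n}$ with the tensor-cotensor adjunction applied twice, but it must be tracked through all four adjunctions to confirm that the resulting transformation of Hom-functors is a natural isomorphism. Once this is established, cone-coreflectivity transfers formally from $\cf$ to $\Hor(\cf)$, while the $\lambda$-presentability of the endpoints follows from the finite-colimit observation above.
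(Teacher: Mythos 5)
Your cone-coreflectivity argument is exactly the paper's: the paper also passes, by the tensor-cotensor adjunction, from a square $h_{f,n}\to g$ to a square from $f$ to the map $X^{\Delta_n}\to Q_{g,n}=X^{\partial\Delta_n}\times_{Y^{\partial\Delta_n}}Y^{\Delta_n}$ (your $\hat h_n$), applies cone-coreflectivity of $\cf$ there, and unrolls the adjunction to get the cone of horns; your explicit bijection $\ck^\to(h_{f,n},h)\cong\ck^\to(f,\hat h_n)$ and the remark that the witnessing family is a countable union of sets over $n$ are correct and, if anything, slightly more carefully stated than in the paper.

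There is, however, a flawed step in your presentability argument. You claim that $\Delta_n$ is a finite colimit of copies of $\Delta_0$; this is false for $n\geq 1$ (colimits of copies of $\Delta_0$ are discrete simplicial sets), so the conclusion that $\Delta_n\otimes B$ is a finite colimit of copies of $B$ does not follow and is false in general (already in $\SSet$: $\Delta_n\otimes\Delta_0\cong\Delta_n$). A finite simplicial set is a finite colimit of standard simplices $\Delta_k$, which does not reduce the question, since the $\lambda$-presentability of $\Delta_k\otimes B$ is precisely what needs justification. The correct route is via the adjunction $\ck_0(K\otimes B,X)\cong\SSet(K,\map(B,X))$: tensoring with a finitely presentable simplicial set preserves $\lambda$-presentable objects provided $\lambda$-presentability is available in the enriched sense (equivalently, $\map(B,-)$ preserves $\lambda$-filtered colimits), which holds in the situations the paper works in --- e.g.\ in $\cp(\ca)$ the generators $\hom(-,A)\otimes K$ of Example~\ref{ex2.2} are already closed under such tensors, and Lemma~\ref{le2.3} handles the dual cotensor statement. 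The paper itself simply asserts that $\partial\Delta_n\otimes B$ and $P_{f,n}$ are $\lambda$-presentable, so your only real divergence is an attempted justification that does not work as written; replace it by the adjunction argument (or by citing closure of $\lambda$-presentables under tensors with finitely presentable simplicial sets) and the proof is complete.
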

\begin{proof}
Since $\partial\Delta_n\otimes B$ and $P_{f,n}$ are $\lambda$-presentable provided that $A$ and $B$ are $\lambda$-presentable,
we have to prove that $\Hor(\cf)$ is cone-coreflective. Let $f\colon A\to B$ be an element of $\cf$. Given a commutative square
$$
\xymatrix@=4pc{
P_{f,n} \ar [r]^{} \ar [d]_{h_{f,n}}& X \ar [d]^{g}\\
\Delta_n\otimes B\ar [r]_{}& Y
}
$$
with $h_{f,n}\in \Hor(\cf)$ and $g$ arbitrary, we form, by adjunction, the following commutative square:
\[
\xymatrix@=4pc{
A \ar[d]_{f} \ar[r] & X^{\Delta^{n}} \ar[d]^{g'}\\
B \ar[r] & Q_{g,n}
}
\]
where $Q_{g,n} = X^{\partial\Delta^n}\times_{Y^{\partial\Delta^n}} Y^{\Delta^{n}}$.

Since $\cf$ is cone-coreflective, there exists a set of morphisms $\cf_{g'}=\{f'\colon A'\to B'\}\subset \cf$ such that any morphism $f\to g'$ 
in $\ck^\to$ factors through some element $f'\in\cf_{g'}$. Unrolling back the adjunction, we obtain the set of horns 
$\Hor(\cf_{g'})=\{h_{f',n}: P_{f',n}\to\Delta^n\otimes B' \;|\, n\geq 0\}$ which depends entirely on $g$. Thus $\Hor(\cf)$
is cone-coreflective. 
\end{proof}

\begin{rem}\label{re3.8}
{
\em
(1) Let $\cf$ be a cone-coreflective class of cofibrations between $\lambda$-presentable cofibrant objects in a class $\lambda$-combinatorial
simplicial model category $\ck$. Then $\Loc(\cf)$ is weakly reflective and closed under $\lambda$-filtered colimits in $\ck$ (following \ref{le3.7}, 
\ref{le3.6} and \cite{CR} 4.4). Recall that a weak reflection $r_K:K\to K^\ast$ is obtained as a factorization 
$$
K \xrightarrow{\quad  r_K\quad} K^\ast\xrightarrow{\quad \quad} 1.
$$
in $(\cof(\Hor(\cf)\cup\cc),(\Hor(\cf)\cup\cc)^\square)$ where $\cc$ is a bounded class such that $\cof(\cc)$ are cofibrations in $\ck$. Thus 
$r_K$ belongs to $\cof(\Hor(\cf)\cup\cc)$. If $\ck$ is left proper then, following \ref{re3.2} (3), 
$\cof(\Hor(\cf)\cup\cc)\subseteq\cof(\cc)\cap\LEq(\cf)$. Hence $r_K$ is both a cofibration and an $\cf$-local equivalence. 

But this does not mean that weak reflections are functorial, i.e., that there exists a functor $L:\ck\to\Loc(\cf)$ and a natural transformation 
$\eta:\Id\to L$ such that $\eta_K=r_K$ for each $K$ in $\ck$. Such a functor $L$ is called an $\cf$-\textit{localization functor}.

(2) Given a model category $\ck$ and a functor $L:\ck\to \ck$, then $\LEq(L)$ will denote the class of morphisms sent by $L$ to weak equivalences.
If both the left Bousfield localization and a localization functor $L$ exist for $\cf$, then $\LEq(\cf)=\LEq(L)$. 

In fact $h$ is an $\cf$-local equivalence iff its cofibrant approximation $\tilde{h}$ is an $\cf$-local equivalence. Since $\eta_K$ is
an $\cf$-local equivalence for each $K$, $\tilde{h}$ is an $\cf$-local equivalence iff $L(\tilde{h})$ is an $\cf$-local equivalence,
i.e., a weak equivalence in $\ck$ (see \ref{re3.2} (2)).
}
\end{rem}

\begin{propo}\label{prop3.9}
Let $\ck$ be a nice class-combinatorial model category and $L:\ck\to \ck$ be a strongly class-accessible functor. Then $\LEq(L)$ 
is a class-accessible category strongly class-accessibly embedded in $\ck^\to$.
\end{propo}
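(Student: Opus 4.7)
The plan is to mimic the pullback/pseudopullback argument used in the proofs of Theorem~\ref{th2.10} and Proposition~\ref{prop3.3}. By the discussion in Remark~\ref{re3.8}(2), a morphism $f\colon A\to B$ belongs to $\LEq(L)$ precisely when $L(f)\colon L(A)\to L(B)$ belongs to the class $\cw$ of weak equivalences of $\ck$. Hence $\LEq(L)$ sits in the strict pullback square
$$
\xymatrix@=4pc{
\ck^\to \ar[r]^{L^\to} & \ck^\to \\
\LEq(L) \ar[u] \ar[r] & \cw \ar[u]
}
$$
where $L^\to\colon\ck^\to\to\ck^\to$ is the functor induced by $L$ on arrow categories, i.e.\ $L^\to(f)=L(f)$.

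First I would verify that the functor $L^\to$ is strongly class-accessible. Since limits, colimits, and $\lambda$-presentable objects in $\ck^\to$ are computed componentwise from those in $\ck$, the strong class-$\lambda$-accessibility of $L$ upgrades to strong class-$\lambda$-accessibility of $L^\to$; this is a routine bookkeeping exercise with the definitions from \cite{CR}. On the other side of the cospan, the hypothesis that $\ck$ is nice is precisely Definition~\ref{def2.11}: $\cw$ is class-accessible and strongly class-accessibly embedded in $\ck^\to$. Moreover, the inclusion $\cw\hookrightarrow\ck^\to$ is transportable, since $\cw$ is a full subcategory closed under isomorphism in $\ck^\to$.

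With these two ingredients in place, \cite{CR} 3.2 implies that the strict pullback displayed above is equivalent to the corresponding pseudopullback, and then \cite{CR} 3.1 delivers that this pseudopullback is a class-accessible category whose projection into $\ck^\to$ (through the left vertical leg) is a strongly class-accessible embedding. Composing the equivalence with this projection yields the strongly class-accessible embedding of $\LEq(L)$ into $\ck^\to$, as required.

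The only non-formal point in the proof is the promotion of strong class-accessibility from $L$ to $L^\to$, and this is the mildest of technicalities since it reduces to the componentwise computation of $\lambda$-filtered colimits and of $\lambda$-presentable objects in $\ck^\to$. No deeper obstacle is anticipated: the proposition is essentially a direct application of the pseudopullback machinery of \cite{CR} to the situation where the niceness hypothesis on $\ck$ makes $\cw$ tractable.
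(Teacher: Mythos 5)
Your proposal is correct and follows essentially the same route as the paper: express $\LEq(L)$ as the (strict) pullback of $\cw\hookrightarrow\ck^\to$ along $L^\to$, use niceness for the class-accessibility of $\cw$ and transportability of its inclusion, and invoke \cite{CR} 3.1 and 3.2 to pass through the pseudopullback. The only addition is your explicit check that $L^\to$ is strongly class-accessible, which the paper leaves implicit.
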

\begin{proof}
By assumption, the class $\cw$ of weak equivalences is class-acce\-ssib\-le and  strongly class-accessibly embedded in $\ck^\to$. Since
$\LEq(L)$ is given by the pullback
$$
\xymatrix@=4pc{
\LEq(L) \ar [r]^{} \ar [d]_{}& \cw \ar [d]^{}\\
\ck^\to\ar [r]_{L^\to}& \ck^\to
}
$$
having the vertical leg on the right transportable, $\LEq(L)$ is a pseudopullback and thus class-accessible and strongly class-accessibly embedded 
in $\ck^\to$ (see \cite{CR} 3.1 and 3.2).
\end{proof} 

\begin{theo}\label{th3.10}
Let $\ck$ be a nice, class-combinatorial, left proper, simplicial model category and let $\cf$ be a class of morphisms in $\ck$. Suppose there exists 
a strongly class-accessible $\cf$-localization functor $L:\ck\to\ck$. Then the left Bousfield localization of $\ck$ with respect to $\cf$ exists 
if and only if the intersection of $LEq(\cf)$ with the cofibrations of $\ck$ is a cone-coreflective class of morphisms.
\end{theo}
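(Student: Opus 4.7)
My plan is to apply Theorem~\ref{th2.13} with $\ci$ taken to be a cone-coreflective generating class of cofibrations of $\ck$ (which exists because $\ck$ is class-$\lambda$-combinatorial) and with $\cw = \LEq(\cf)$ for the sufficiency direction, while necessity will be read off directly from the trivial cofibrations of the localized model structure.

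For sufficiency, I would verify hypotheses (1)--(3) of Theorem~\ref{th2.13} with this data. For (1), the existence of the strongly class-accessible $\cf$-localization functor $L$ together with Remark~\ref{re3.8}(2) identifies $\LEq(\cf)$ with $\LEq(L)$; since $\ck$ is nice, Proposition~\ref{prop3.9} then guarantees that $\LEq(\cf)$ is class-accessible and strongly class-accessibly embedded in $\ck^\to$, and the 2-out-of-3 property transfers from weak equivalences of $\ck$ applied to the $L$-image. Condition~(2) is immediate: $\ci^\square$ consists of trivial fibrations, which are weak equivalences, hence $\cf$-local equivalences by Remark~\ref{re3.2}(2). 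Condition~(3) combines the standing cone-coreflectivity assumption with Remark~\ref{re3.2}(3), which uses left properness to give closure of $\cof(\ci)\cap\LEq(\cf)$ under pushout and transfinite composition. Theorem~\ref{th2.13} then produces the desired model category structure in which cofibrations are $\cof(\ci)$ and weak equivalences are $\LEq(\cf)$, i.e., the left Bousfield localization with respect to $\cf$.

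For necessity, if the left Bousfield localization exists then $\cof(\ci)\cap\LEq(\cf)$ is precisely the class of trivial cofibrations of the localized model structure. Cone-coreflectivity is then extracted by combining the factorization axiom with strong class-accessibility of $L$: given any morphism $f$ in $\ck^\to$, every trivial cofibration mapping to $f$ can be funneled through a factorization whose middle-term presentability is bounded in terms of the presentability rank of $Lf$, producing the required small set $\cc_f$.

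The main obstacle is the verification of the strong class-accessible embedding of $\LEq(\cf)$ in~(1), which hinges entirely on the identification $\LEq(\cf)=\LEq(L)$ and on $\ck$ being nice; beyond that, the cone-coreflectivity hypothesis is positioned precisely to match~(3) of Theorem~\ref{th2.13}, so the theorem should be understood as repackaging the existence of left Bousfield localization into exactly the cone-coreflectivity condition on $\cof(\ci)\cap\LEq(\cf)$. In concrete examples the genuinely nontrivial work lies in verifying this last condition, not in the formal application of Theorem~\ref{th2.13} sketched above.
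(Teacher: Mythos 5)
Your sufficiency direction is essentially the paper's own proof: identify $\LEq(\cf)=\LEq(L)$ via Remark~\ref{re3.8}(2), use niceness and Proposition~\ref{prop3.9} to get class-accessibility and strong accessible embedding of $\LEq(\cf)$, get (2) from Remark~\ref{re3.2}(2), get (3) from left properness via Remark~\ref{re3.2}(3) together with the hypothesized cone-coreflectivity, and apply Theorem~\ref{th2.13}. The only point you gloss over is the cardinal alignment: Theorem~\ref{th2.13} needs a single $\lambda$ for which $\ci$ is $\lambda$-bounded and $\LEq(\cf)$ is class-$\lambda$-accessible and strongly class-$\lambda$-accessibly embedded; the paper arranges this by choosing $\lambda$ with $\mu,\nu\vartriangleleft\lambda$, where $\LEq(\cf)$ is class-$\mu$-accessible and $\ck$ is class-$\nu$-combinatorial. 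That is a minor omission.

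The necessity direction, however, is argued by a mechanism that does not work. You propose to produce the set $\cc_f$ by bounding the presentability rank of middle terms ``in terms of the presentability rank of $Lf$.'' In a locally class-presentable category the $\lambda$-presentable objects form a proper class in general (this is precisely why the class-setting is needed), so a presentability bound does not yield a \emph{set} of morphisms, and strong class-accessibility of $L$ is irrelevant to this direction. The correct argument, which is what the paper means by saying necessity follows from the (trivial cofibration, fibration) factorizations in the localized model category, is purely formal: given $f:X\to Y$, factor it in the localized structure as $f=p\,j$ with $j:X\to Z$ in $\cof(\ci)\cap\LEq(\cf)$ and $p:Z\to Y$ a fibration there. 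For any morphism $(u,v):g\to f$ in $\ck^\to$ with $g\in\cof(\ci)\cap\LEq(\cf)$, the induced square from $g$ to $p$ (with top $j\,u$ and bottom $v$) admits a diagonal $d$ because $g$ is a trivial cofibration of the localized structure; then $(u,d):g\to j$ followed by $(\id_X,p):j\to f$ recovers $(u,v)$. Hence $\cc_f=\{j\}$ is a singleton and $\cof(\ci)\cap\LEq(\cf)$ is cone-coreflective, with no accessibility input at all. Replacing your presentability argument by this lifting argument closes the gap.
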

\begin{proof}
Necessity immediately follows from the existence of the (trivial cofibration, fibration) factorizations in the localized model category cf.
\cite{CR} 4.2 (2)).
  
In order to establish sufficiency, we will verify the conditions of \ref{th2.13}. By \ref{re3.8} (2) and \ref{prop3.9} the subcategory $\LEq(\cf)$ 
is class-accessible. There is a regular cardinal $\lambda$ such that $\LEq(\cf)$ is class-$\lambda$-accessible and $\ck$ is class-$\lambda$-combinatorial. 
In fact, it $\LEq(\cf)$ is class-$\mu$-accessible and $\ck$ is class-$\nu$-combinatorial, it suffices to take $\mu,\nu\vartriangleleft\lambda$.
Let $\ci$ be the generating class of cofibrations in $\ck$. Then  $\ci^\square\subseteq\LEq(\cf)$ because $\LEq(\cf)$ contains all weak equivalences. 
Following \ref{re3.2} (3), $\cof\ci\cap \LEq(\cf)$ is closed under pushouts and transfinite compositions.
\end{proof} 

\begin{exam}\label{ex3.11}
{
\em
Let $\ca$ be a complete simplicial category. Then $\cp(\ca)$ equipped with the projective model structure is a class-combinatorial model 
category (see \ref{prop2.4}). Let $f:V\to W$ be a cofibration of simplicial sets. Then the class  $\cf = \{f\otimes\hom(-,A) \, | \, A\in \ca\}$ 
is bounded. The argument is the same as in the proof of \ref{prop2.4}. The localization of $\cp(\ca)$ with respect to $\cf$ is equivalent 
to the levelwise localization with respect to $f$. 

Let $L_f:\SSet\to\SSet$ be the $f$-localization functor, i.e., a fibrant replacement functor in the $f$-localized model category structure
on $\SSet$. Then $L_f$ is finitely accessible provided that $V$ and $W$ are finitely presentable.
Moreover, $L_f$ is always simplicial (see \cite{S}, 24.2). Similarly to Lemma \ref{le2.6}, we get a strongly class-accessible simplicial
functor $L:\cp(\ca)\to\cp(\ca)$ assigning to $F$ the composition $L_fF$. Since $\LEq(L)=\LEq(\cf)$, $\LEq(\cf)$ is a class-accessible 
subcategory of $\cp(\ca)^{\to}$ by \ref{prop3.9}.

In a general case, $L_f$ is accessible and we would need an extension of \ref{le2.3} to $\lambda$-small weighted limits. This is valid but
we have not burdened our paper with a proof. 
}
\end{exam}

\begin{rem}\label{re3.12}
{
\em
Let $\cf$ be a set of cofibrations between cofibrant objects in a nice class-combinatorial left proper model category $\ck$ such that $\ck$
admits a strongly class-accessible fibrant replacement functor and $\Hor(\cf)$ is bounded. Since $\Hor(\cf)$ is a set, there is a strongly
class-accesible weak reflection on $\Hor(\cf)$-injective objects (see \cite{CR} 4.8 (1)). We can assume that the both functors are strongly
class-$\lambda$-accessible (see \cite{CR} 2.8). Thus they are strongly class-$\lambda^+$-accessible (\cite{CR} 2.8 again) and, following
\cite{CR} 4.8 (5), there is a strongly class-$\lambda^+$-accessible $\cf$-localization functor $L$. Since $\LEq(\cf)=\LEq(L)$, $\LEq(\cf)$
is strongly class-accessible and strongly class-accessibly embedded in $\ck^\to$. Following \ref{th2.13}, the left Bousfield localization
of $\cf$ exists provided that the intersection of cofibration with $\LEq(\cf)$ is cone-coreflective. 
}
\end{rem}

Let $\ck$ be a model category. A functor $L:\ck\to \ck$ a equipped with natural transformation $\eta: \Id_{\ck}\to L$ is called
\textit{homotopy idempotent} if $L\eta_K$ and $\eta_{LK}$ are weak equivalences for each $K$ in $\ck$.
 
\begin{defi}\label{def3.13}
{
\em
Let $\ck$ be a model category equipped with a homotopy idempotent functor $L:\ck\to \ck$ preserving weak equivalences. A left Bousfield localization 
of $\ck$ with respect to $L$, or just $L$-\textit{localization} of $\ck$ is a new model structure on $\ck$ such that the class of cofibrations coincides 
with the original class of cofibrations in $\ck$ and the class of weak equivalences is $\LEq(L)$. New fibrations are called $L$-\textit{fibrations}.
}
\end{defi}

\begin{theo}\label{th3.14}
Let $\ck$ a nice, proper, simplicial class-combinatorial model category and $L:\ck\to\ck$ a strongly class-accessible homotopy idempotent functor 
preserving weak equivalences. Suppose additionally, that pullbacks of $L$-equivalences along $L$-fibrations are $L$-equivalences. Then 
the $L$-localization exists and is class-combinatorial.
\end{theo}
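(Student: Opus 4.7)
The plan is to apply Theorem~\ref{th2.13} with $\ci$ the generating class of cofibrations of $\ck$ and $\cw=\LEq(L)$, and then upgrade the resulting model structure to a class-combinatorial one. Choose a regular cardinal $\lambda$ large enough that $\ck$ is class-$\lambda$-combinatorial, $\ci$ is $\lambda$-bounded, $L$ is strongly class-$\lambda$-accessible, and (by Proposition~\ref{prop3.9}) $\cw$ is class-$\lambda$-accessible and strongly class-$\lambda$-accessibly embedded in $\ck^\to$. The two-out-of-three property of $\cw$ is inherited from weak equivalences of $\ck$ since $L$ preserves composition and weak equivalences, so condition~(1) of Theorem~\ref{th2.13} holds. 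Condition~(2) is immediate since $\ci^\square$ consists of trivial fibrations in $\ck$, which are $L$-equivalences.

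For condition~(3), closure of $\cof(\ci)\cap\cw$ under pushout follows from the classical left-proper localization argument: for a pushout $f'$ of $f\in\cof(\ci)\cap\cw$ along some $g$, left properness of $\ck$ makes the square a homotopy pushout, and the hypothesis that pullbacks of $L$-equivalences along $L$-fibrations are $L$-equivalences, combined with the homotopy-idempotence of $L$ and the existence of functorial factorizations in $\ck$, permits a Hirschhorn-type argument identifying $f'$ as an $L$-equivalence. Closure under transfinite composition then splits into two cases: for sequences of length at least $\lambda$, strong class-$\lambda$-accessibility of $L$ allows interchange with the colimit, reducing to 2-out-of-3 and closure of weak equivalences of $\ck$ under $\lambda$-filtered colimits; for shorter chains, iterated pushout-closure via the smooth-chain decomposition suffices.

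The main obstacle is cone-coreflectivity of $\cof(\ci)\cap\cw$. The strategy leverages three ingredients: $\cof(\ci)$ is cone-coreflective by class-$\lambda$-combinatoriality of $\ck$; $\cw$ is cone-coreflective by Proposition~\ref{prop3.9} combined with the class-accessibility machinery of \cite{CR}; and the embedding $\cw\hookrightarrow\ck^\to$ preserves $\lambda$-presentable objects. Given a test morphism $h$, the plan is to first apply cone-coreflectivity of $\cof(\ci)$ to produce a set of $\lambda$-presentable cofibrations through which any map into $h$ with domain in $\cof(\ci)$ factors, then restrict to those lying in $\cw$ using the $\lambda$-presentable filtered-colimit presentation of $L$-equivalences afforded by class-accessibility, and finally verify that the restricted set detects all factorizations from $\cof(\ci)\cap\cw$ into $h$. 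The delicate point is maintaining membership in both $\cof(\ci)$ and $\cw$ simultaneously under the factorizations produced, which is where the strong class-$\lambda$-accessible embedding and the 2-out-of-3 property are jointly used.

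Once Theorem~\ref{th2.13} applies, we obtain the $L$-localized model structure on $\ck$. For class-combinatoriality, the (cofibration, trivial fibration) factorization system is inherited unchanged from $\ck$ and remains class-$\mu$-cofibrantly-generated for every $\mu\trianglerighteq\lambda$. The new (trivial cofibration, fibration) factorization system is generated by the cone-coreflective class $\cj$ constructed inside the proof of Theorem~\ref{th2.13}, whose members have $\lambda$-presentable domains and codomains, and the boundedness condition~(2) of Definition~\ref{def2.1} propagates to every $\mu\trianglerighteq\lambda$ by the same argument used in Proposition~\ref{prop2.4}.
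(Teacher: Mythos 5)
Your overall plan (apply Theorem~\ref{th2.13} with $\cw=\LEq(L)$ and then upgrade) is reasonable in outline, but the two steps you identify as the crux are exactly the ones you do not actually prove, and your proposed strategies for them would not work. First, cone-coreflectivity of $\cof(\ci)\cap\LEq(L)$: your plan is to intersect the cone-coreflectivity of $\cof(\ci)$ with the class-accessibility of $\cw$ and ``verify that the restricted set detects all factorizations.'' The paper explicitly warns (Remark~\ref{re2.14}) that it is not known whether cone-coreflectivity of $\cof(\ci)\cap\cw$ follows from accessibility of $\cw$ plus the other hypotheses; there is no general argument of the kind you sketch, and you never use the one hypothesis that makes this theorem work, namely that pullbacks of $L$-equivalences along $L$-fibrations are $L$-equivalences. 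The paper's proof gets cone-coreflectivity from the explicit Bousfield--Friedlander construction: given $f\colon X\to Y$, factor $Lf$ as a trivial cofibration $LX\to W$ followed by a fibration $W\to LY$, form $P=W\times_{LY}Y$, use the pullback hypothesis and 2-out-of-3 to see that $X\to P$ is an $L$-equivalence, and factor it as a cofibration $X\to Z$ followed by a trivial fibration. Then $Z\to Y$ is an $L$-fibration, so every square from an $i\in\cof(\ci)\cap\LEq(L)$ to $f$ admits a lift $B\to Z$, i.e.\ every such $i\to f$ factors through the single cofibration-and-$L$-equivalence $X\to Z$. This is where properness and the pullback hypothesis are consumed; your proposal puts them only into the pushout-closure step.

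Second, class-combinatoriality of the localized structure: you claim the boundedness condition of Definition~\ref{def2.1} for the new (trivial cofibration, fibration) system ``propagates \ldots by the same argument used in Proposition~\ref{prop2.4}.'' That argument is specific to $\cp(\ca)$ with finitely presentable generating (trivial) cofibrations and does not transfer; moreover Remark~\ref{re2.14} states explicitly that the model structure produced by Theorem~\ref{th2.13} is not known to be class-combinatorial (only condition 4.7(1) for $\cj$ is known). The paper instead verifies boundedness directly on the Bousfield--Friedlander factorization: if $X,Y$ are $\lambda$-presentable then $LX,LY,W$ are $\lambda$-presentable by strong class-$\lambda$-accessibility of $L$, the limit-preserving strongly accessible embedding $E\colon\ck\to\cp(\ca)$ of \cite{CR} 2.6 together with Lemma~\ref{le2.3} shows the pullback $P$ is $\lambda$-presentable, hence so is the middle object $Z$. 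Without an argument of this kind your final paragraph is an assertion, not a proof. (Your treatment of pushout and transfinite-composition closure is also only sketched, but that is a secondary issue compared with these two gaps.)
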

\begin{proof}
It was shown in \cite[Appendix A]{BF}, that the pair 
$$
(\cof(\ci)\cap\LEq(L),(\cof(\ci)\cap\LEq(L))^\square)
$$ 
is a weak factorization system. They argue as follows. 

Take $i\in \cof(\ci)\cap\LEq(L)$ and $f:X\to Y$. For any morphism $i\to f$ in $\ck^\to$ we perform the following construction:
\[
\xymatrix{
A\ar[r] \ar@{^(->}[ddd]_i & X\ar[rrr] \ar[ddd]_{f} \ar@{^(->}[dr] &    &   & LX\ar@{^(->}[dd]^{\dir{~}}\\
     &     &      Z \ar@{->>}[dr]^{\dir{~}}\\
     &     &             &P\ar[r] \ar@{->>}[dll] & W \ar@{->>}[d]\\
B\ar[r] \ar@{-->}[uurr]& Y \ar[rrr]&    &     &L
Y.\\
}
\]
After applying the functor $L$ on the morphism $f$ we factor $Lf$ as a trivial cofibration followed by a fibration in $\ck$, obtaining the $L$-fibration 
$W\to LY$, since this is a fibration of $L$-local objects. Then constructing $P= W\times_{LY} Y$ we obtain an $L$-fibration $P\to Y$ as a pullback 
of an $L$-fibration and an $L$-equivalence $P\to W$ due to the additional assumption. The induced morphism  $X\to P$ is an $L$-equivalence 
by the 2-out-of-3  property. Now we factor the morphism $X\to P$ into a cofibration followed by a trivial fibration in $\ck$. As the composition 
of two $L$-fibrations, the morphism $Z\to Y$ is an $L$-fibration, hence there exists a lift $B\to Z$, showing that $\cof(I)\cap\LEq(\cf)$ 
is cone-coreflective. 

Like in the proof of \ref{th3.10}, there exists a regular cardinal $\lambda$ such that $\ck$ is $\lambda$-combinatorial and $L$ strongly 
class-$\lambda$-accessible. Assume  that $X$ and $Y$ are $\lambda$-presentable. Then $LX, LY$ and $W$ are $\lambda$-presentable. Since $\ck$ 
is locally $\lambda$-presentable simplicial category, the functor $E:\ck\to\cp(\ca)$ from the proof of \cite{CR} 2.6 takes values in simplicial 
presheaves and thus $\cp(\ca)$ can be taken in the sense of \ref{ex2.2}. The functor $E$ sends $\lambda$-presentable objects to finitely presentable 
ones and thus it is strongly class-$\lambda$-acccessible. Since $E$ preserves limits (see \cite{CR} 2.6), \ref{le2.3} implies that $EP$ is 
$\lambda$-presentable. Since $\ck$ is closed under $\lambda$-filtered colimits in $\cp(\ca)$, $P$ is $\lambda$-presentable. Thus $Z$ is 
$\lambda$-presentable. Consequently, the $L$-localized model category is class-$\lambda$-combinatorial. 
\end{proof}

\begin{exam}\label{ex3.15}
{
\em
Take $f:V\to 1$ in Example \ref{ex3.11}. For such maps $f$-localization functor is called also $V$-nullification. Then the resulting class 
of $f$-equivalences satisfies the conditions of \ref{th3.14}, since the nullification of spaces (i.e., the localization with respect to $f$ 
for $f$ as above) is a right proper model category (see, e.g., \cite{Bo1}). Hence the model category resulting from the levelwise nullification 
of the projective model structure on the category of small functors is class-combinatorial again. 
}
\end{exam}

\begin{exam}\label{ex3.16}
{\em
Consider the category $\cp(\SSet^{\op})$ of small simplicial functors from simplicial sets to simplicial sets equipped with the projective model 
structure (see \ref{prop2.4}). Consider the localization functor $L:\cp(\SSet^{\op})\to\cp(\SSet^{\op})$, $L=P_{n}\circ \Fib$ constructed 
in \cite{BCR}, where $P_{n}$ is Goodwillie's $n$-th polynomial approximation \cite{Goodwillie-CalculusIII} and $\Fib:\cp(\SSet^{\op})\to\cp(\SSet^{\op})$ 
is the strongly class-accessible fibrant replacement functor from \ref{le2.6}. Since $P_{n}$ is a countable colimit of finite homotopy limits of cubical 
diagrams applied on homotopy pushouts (joins with finite sets used to construct $P_n$ in \cite{Goodwillie-CalculusIII} may be expresses as homotopy 
pushouts), it is strongly class-accessible. Thus $L$ is strongly class-accessible, hence the polynomial model structure constructed in \cite{BCR} 
is class-combinatorial. 
}
\end{exam}

The condition on the localization functor to be strongly class-acce\-ssib\-le may not be omitted in \ref{th3.14} as the following example shows. 

\begin{exam}\label{ex3.17}
{
\em
The following localization of the class-combinatorial model category $\cp(\SSet)$ was constructed in \cite{C1}. The localization functor 
$L:\cp(\SSet)\to\cp(\SSet)$ is the composition of the evaluation functor at the one point space $\ev_\ast(F)= F(1)$ with the fibrant replacement 
$\widehat{(-)}$ in simplicial sets and the Yoneda embedding $Y:\SSet\to\cp(\SSet)$, i.e., $L(F) = \hom(-, \widehat{F(1)})$. This localization functor 
satisfies the conditions of $A6$ in \cite{BF} (pullback of an $L$-equivalence along an $L$-fibration is an $L$-equivalence again), 
and hence there exists the $L$-local model structure on $\cp(\SSet)$. The fibrant objects in the localized model category are the levelwise fibrant 
functors weakly equivalent to the representable functors, but they are not closed under filtered colimits, since filtered colimit of representable 
functors need not be representable, no matter how large the filtered colimit is. On the other hand, in a class-cofibrantly generated model category 
sufficiently large filtered colimits of fibrant objects are fibrant again. In other words, we obtained the a localization of $\cp(\SSet)$, which is not 
class-cofibrantly generated. The reason is that the localization functor $L$ is not class-accessible. See \cite{C1} for more details on this model 
structure.
}
\end{exam}

\end{document}